\theoremstyle{plain}
\newtheorem{theorem}{Theorem}
\newtheorem{lemma}[theorem]{Lemma}
\newtheorem{corollary}[theorem]{Corollary}
\theoremstyle{definition}
\newtheorem{definition}{Definition}
\newtheorem{example}{Example}
\theoremstyle{remark}
\newcommand{\C}{\mathbb{C}}
\newcommand{\D}{\mathbb{D}}
\newcommand{\Z}{\mathbb{Z}}
\newcommand{\R}{\mathbb{R}}
\newcommand{\bp}{\begin{pmatrix}}
\newcommand{\ep}{\end{pmatrix}}
\newcommand{\mc}{\mathcal}
\newcommand{\mf}{\mathfrak}
\newcommand{\m}{\text{\bf m}}
\newcommand{\SU}{\text{SU}}
\newcommand{\UU}{\text{U}}
\newcommand{\su}{\mathfrak{su}}
\newcommand{\Tr}{\text{tr}}
\newcommand{\twomat}[4]{\left[\begin{array}{rr}#1&#2\\#3&#4\end{array}\right]}
\DeclareMathOperator{\tp}{top}
\DeclareMathOperator{\Ad}{Ad}
\DeclareMathOperator{\ind}{ind}
\begin{document}

\title{An orbit model for the spectra of nilpotent Gelfand pairs}

\author[Friedlander, Grodzicki, Johnson, Ratcliff, Romanov, Strasser, Wessel]{Holley Friedlander, William Grodzicki, Wayne Johnson, Gail Ratcliff, Anna Romanov, Benjamin Strasser, Brent Wessel}

\maketitle

\begin{abstract}
Let $N$ be a connected and simply connected nilpotent Lie group, and let $K$ be a subgroup of the automorphism group of $N$. We say that the pair $(K,N)$ is a \emph{nilpotent Gelfand pair} if $L^1_K(N)$ is an abelian algebra under convolution. In this document we establish a geometric model for the Gelfand spectra of nilpotent Gelfand pairs $(K,N)$ where the $K$-orbits in the center of $N$ have a one-parameter cross section and satisfy a certain non-degeneracy condition. More specifically, we show that the one-to-one correspondence between the set $\Delta(K,N)$ of bounded $K$-spherical functions on $N$ and the set $\mathcal{A}(K,N)$ of $K$-orbits in the dual $\mathfrak{n}^*$ of the Lie algebra for $N$ established in \cite{BRfree} is a homeomorphism for this class of nilpotent Gelfand pairs. This result had previously been shown for $N$ a free group and $N$ a Heisenberg group, and was conjectured to hold for all nilpotent Gelfand pairs in \cite{BRfree}.
\end{abstract}

\section{Introduction}
\label{Introduction}

A \emph{Gelfand pair} $(G,K)$ consists of a locally compact topological group $G$ and a compact subgroup $K \subset G$ such that the space $L^1(G//K)$ of integrable $K$-bi-invariant functions on $G$ is commutative. Such pairs arise naturally in harmonic analysis and representation theory of Lie groups, with perhaps the best known examples emerging in the case of a connected semisimple Lie group $G$ with finite center and maximal compact subgroup $K$. These pairs have played a critical role in understanding the representation theory of semisimple Lie groups, and have been studied extensively in the past 50 years \cite{Gang, Helgason}. We are interested in a class of Gelfand pairs which arise in analysis on nilpotent Lie groups. Let $N$ be a connected and simply connected nilpotent Lie group, and let $K$ be a compact subgroup of the automorphism group of $N$.  We say $(K,N)$ is a \emph{nilpotent Gelfand pair} if $L^1_K(N)$ is an abelian algebra under convolution. In this setting, the pair $(K \ltimes N, K)$ is a Gelfand pair by our initial definition. By \cite[Theorem A]{BJR1}, any such $N$ is two-step (or abelian), with center $Z$ and top step $V:= N/Z$. Nilpotent Gelfand pairs have been completely classified in \cite{Vin03,Yak05,Yak06}. In the case where $N$ is a Heisenberg group, such pairs have been extensively studied, and several interesting topological models for their spectra exist in the literature \cite{BJR2,BJRW,GEOM}. In this paper, we develop a topological model for the spectra of a more general class of nilpotent Gelfand pairs. 

For a nilpotent Gelfand pair $(K,N)$, let $\mathfrak{n} =$ Lie$N$ and write $\mf{n} = \mathcal{V} \oplus \mf{z}$, where $\mf{z} =$ Lie$Z$ is the center of $\mf{n}$ and $[\mathcal{V},\mathcal{V}] \subseteq \mf{z}$.  Let $\D_K (N)$ be the algebra of differential operators on $N$ that are simultaneously invariant under the action of $K$ and left multiplication by $N$. The algebra $\mathbb{D}_K(N)$ is freely generated by a finite set of differential operators $\{D_0, \dots, D_r\}$, obtained from a generating set $\{p_0, \dots, p_r\}$ of the algebra of $K$-invariant polynomials on $N$ via quantization. When $(K,N)$ is a nilpotent Gelfand pair, it is known that $\mathbb{D}_K(N)$ is abelian. In this setting, a smooth function $\phi : N\longrightarrow \mathbb{C}$ is called \textit{K-spherical} if 
\begin{itemize}
\item{$\phi$ is $K$-invariant,}
\item{$\phi$ is a simultaneous eigenfunction for all $D \in \mathbb{D}_K(N)$, and }
\item{$\phi (e)=1$, where $e$ is the identity element in $N$.}
\end{itemize}

Our object of interest is the set $\Delta(K,N)$ of bounded $K$-spherical functions on $N$. By integration against spherical functions, $\Delta(K,N)$ can be identified with the spectrum of the commutative Banach $\star$-algebra $L_K^1(N)$. Because of this identification, we will refer to $\Delta(K,N)$ as the \textit{Gelfand space} (or \textit{Gelfand spectrum}) of $(K,N)$, where the topology of uniform convergence on compact sets on $\Delta(K,N)$ coincides with the weak-$*$ topology on the spectrum of $L_K^1(N)$.  There is an established topological model for $\Delta(K,N)$ in terms of eigenvalues of the differential operators in $\mathbb{D}_K(N)$. This technique was first used in \cite{Wol06} to embed the spectrum of any Gelfand pair into an infinite dimensional Euclidean space using all $D \in \mathbb{D}_K(N)$. It was modified in \cite{FR07} to the precise topological description in Theorem \ref{EigenModel}, which we will refer to as the ``eigenvalue model." 

For a differential operator $D \in \mathbb{D}_K(N)$ and a $K$-spherical function $\phi$, we denote the corresponding eigenvalue by $\widehat{D}(\phi)$; that is, 
$$D \cdot \phi = \widehat{D}(\phi)\phi.$$ 

\begin{theorem}\cite{FR07}\label{EigenModel} For any generating set $(D_0, \ldots , D_r)$ of $\D_K(N)$, the map $\Phi: \Delta(K,N) \to \R^r$ defined by $\phi \mapsto (\hat{D}_0(\phi), \ldots, \hat{D}_r(\phi))$ is a homeomorphism onto its image.
\end{theorem}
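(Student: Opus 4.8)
The plan is to check three properties of $\Phi$: that it is continuous, that it is injective, and that its inverse on the image $\Phi(\Delta(K,N))$ is continuous. Two of these will use the fact that, since $(K,N)$ is a nilpotent Gelfand pair, $\mathbb{D}_K(N)$ is the polynomial algebra on $D_0,\dots,D_r$ and, being identified with the algebra $U(\mf n)^K$ of $K$-invariant left-$N$-invariant differential operators, contains an \emph{elliptic} operator $L$ — for instance the Laplacian $\sum_i\widetilde{X}_i^{2}+\sum_j\widetilde{Z}_j^{2}$ built from an orthonormal basis of $\mf n=\mc V\oplus\mf z$ relative to a $K$-invariant inner product, which is elliptic because it involves all of $\mf n$. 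Write $\widehat L=P(\widehat{D_0},\dots,\widehat{D_r})$ for the polynomial $P$ expressing $L$ in the generators.

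\emph{Continuity.} The key point is that each eigenvalue is a ratio of weak-$*$ continuous functions of $\phi$. Evaluating $D_i\phi=\widehat{D_i}(\phi)\,\phi$ at $e$ gives $\widehat{D_i}(\phi)=(D_i\phi)(e)$, and for any $K$-invariant $f\in C_c^\infty(N)$, integration by parts gives $\int_N(D_i^{*}f)\,\phi\,dx=\widehat{D_i}(\phi)\int_N f\,\phi\,dx$, where $D_i^{*}f$ is again a $K$-invariant element of $C_c^\infty(N)\subset L^1_K(N)$. Since the maps $\psi\mapsto\int_N g\,\psi\,dx$ for $g\in L^1_K(N)$ are weak-$*$ continuous on $\Delta(K,N)$, it suffices to fix $\phi_0$, pick a nonnegative $K$-invariant $f$ supported near $e$ with $\int_N f\,\phi_0\,dx\neq0$ (possible because $\phi_0$ is continuous with $\phi_0(e)=1$), and observe that on the weak-$*$ neighbourhood of $\phi_0$ where the denominator is nonzero, $\widehat{D_i}$ is the continuous ratio $\bigl(\int_N(D_i^{*}f)\,\phi\,dx\bigr)\big/\bigl(\int_N f\,\phi\,dx\bigr)$. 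Hence $\Phi$ is continuous.

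\emph{Injectivity.} Since $L$ is elliptic with real-analytic (indeed polynomial, in exponential coordinates) coefficients and every $\phi\in\Delta(K,N)$ is a joint eigenfunction of $\mathbb{D}_K(N)$, each $\phi$ is real-analytic on the connected manifold $N$, hence determined by its infinite jet at $e$. That jet is determined by $\Phi(\phi)$: for $K$-invariant $\phi$ and any $u\in U(\mf n)$, differentiating $\phi(k\cdot x)=\phi(x)$ and using that $K$ acts by automorphisms fixing $e$ yields $(u\phi)(e)=(\Ad(k)u\cdot\phi)(e)$ for every $k\in K$; averaging over $K$ gives $(u\phi)(e)=(u^{\sharp}\phi)(e)=\widehat{u^{\sharp}}(\phi)$ with $u^{\sharp}:=\int_K\Ad(k)u\,dk\in U(\mf n)^K=\mathbb{D}_K(N)$. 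Writing $u^{\sharp}=Q(D_0,\dots,D_r)$ gives $(u\phi)(e)=Q(\Phi(\phi))$. Thus if $\Phi(\phi_1)=\Phi(\phi_2)$ then $\phi_1-\phi_2$ is real-analytic on connected $N$ and vanishes to infinite order at $e$, so $\phi_1=\phi_2$.

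\emph{Continuity of the inverse — the main obstacle.} Suppose $\Phi(\phi_n)\to\Phi(\phi)$; I must show $\phi_n\to\phi$ weak-$*$, equivalently uniformly on compacta. As $\Delta(K,N)$ is metrizable ($L^1_K(N)$ being separable), it is enough to extract from any subsequence a further subsequence converging to $\phi$. Each $\phi_n$ satisfies $\|\phi_n\|_\infty=\phi_n(e)=1$ (a bounded spherical function, being a norm-$\le1$ character of $L^1_K(N)$ paired against $L^1$, has sup-norm equal to its value at $e$), and $L\phi_n=\widehat L(\phi_n)\phi_n$ with $\widehat L(\phi_n)=P(\Phi(\phi_n))$ a bounded sequence. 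The a priori estimates for the elliptic operators $L-\widehat L(\phi_n)$ then bound $\{\phi_n\}$ in every $C^k$-norm on every compact subset of $N$, uniformly in $n$; by Arzel\`a--Ascoli and diagonalization, a sub-subsequence converges in $C^\infty_{\mathrm{loc}}(N)$ to some $\psi$, necessarily $K$-invariant with $\psi(e)=1$, $|\psi|\le1$, and $D_i\psi=\widehat{D_i}(\phi)\psi$ for each $i$; so $\psi\in\Delta(K,N)$ and $\Phi(\psi)=\Phi(\phi)$, whence $\psi=\phi$ by injectivity. Therefore $\phi_n\to\phi$, so $\Phi^{-1}$ is continuous and $\Phi$ is a homeomorphism onto its image. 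The heart of the argument is precisely this last step — upgrading weak-$*$ (equivalently, compact-open) control of a family of spherical functions to $C^\infty$ control — which genuinely needs the elliptic $L$: it is the convergence of $\Phi(\phi_n)$ that pins down the eigenvalues $\widehat L(\phi_n)$, and these together with the uniform sup-norm bound are exactly what make the elliptic estimates uniform. Without such input a weak-$*$ limit of spherical functions need not be spherical, since eigenvalues may escape to infinity.
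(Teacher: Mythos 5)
The paper does not prove this theorem itself; it cites Ruffino \cite{FR07} (following Wolf \cite{Wol06}), so there is no in-paper proof to compare against. Your argument is correct and follows the same route as that literature: continuity from expressing $\widehat{D_i}(\phi)$ as the ratio of the weak-$*$-continuous pairings of $\phi$ with $D_i^*f$ and with $f$, on a neighborhood where the denominator does not vanish; injectivity from the real-analyticity of spherical functions (ellipticity of a $K$-invariant Laplacian $L$ with polynomial, hence analytic, coefficients) together with the jet-determination identity $(u\phi)(e)=\widehat{u^\sharp}(\phi)$ for $u^\sharp=\int_K (k\cdot u)\,dk\in\mathbb{D}_K(N)$; and inverse continuity by a uniform elliptic bootstrap, Arzel\`a--Ascoli, and the metrizability of $\Delta(K,N)$. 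The parenthetical facts you lean on all hold: $L$ built from a $K$-invariant orthonormal basis of $\mathfrak{n}$ is elliptic because left-invariant vector fields span the tangent space at every point of $N$, and it has polynomial coefficients in exponential coordinates since $N$ is nilpotent; and $L^1_K(N)$ is separable, so $\Delta(K,N)$, sitting in the closed unit ball of $L^1_K(N)^*$, is metrizable in the weak-$*$ topology. Your proof is complete.
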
 
Let $\mc{E}(K,N)$ denote the image of $\Delta(K,N)$ under $\Phi$. In this paper, we develop a different topological model for $\Delta(K,N)$ (the promised ``orbit model"), and the existence of the eigenvalue model plays a critical role in proving our desired convergence results. 

In \cite{BRfree}, the authors establish a bijection between $\Delta(K,N)$ and a set $\mathcal{A}(K,N)$ of $K$-orbits in the dual $\mathfrak{n}^*$ of $\mathfrak{n}$, which we refer to as $K$-spherical orbits (Definition \ref{K-spherical orbits}). We describe this bijection precisely below. It is conjectured in \cite{BRfree} that this bijection is a homeomorphism, and the goal of this paper is to prove the conjecture for a certain class of nilpotent Gelfand pairs (Definitions \ref{Spherical Central Orbits} and \ref{Non-Degenerate on V}). The topological correspondence $\Delta(K,N) \leftrightarrow \mathcal{A}(K,N)$ is motivated by the ``orbit model" philosophy of representation theory, which asserts that the irreducible unitary representations of a Lie group should correspond to coadjoint orbits in the dual of its Lie algebra. For nilpotent and exponential solvable groups, orbit methods have been established, and these methods provide a homeomorphism between the unitary dual and the space of coadjoint orbits \cite{Bro73, LL94}. 

To describe such a model for nilpotent Gelfand pairs, let $G=K \ltimes N$, let $\widehat{G}$ denote the unitary dual of $G$, and let
\[
\widehat{G}_K=\{ \rho \in \widehat{G}: \rho  \mbox{ has a 1-dimensional space of K-fixed vectors}\}.
\]

For each $\rho \in \widehat{G}$, the {\em orbit method} of Lipsman \cite{Lip80, Lip82} and Pukanszky \cite{Puk78} produces a well-defined coadjoint orbit $\mathcal{O}(\rho) \subset \mf{g}^*$, and the orbit mapping 
\[
\widehat{G} \rightarrow \mathfrak{g}^*/Ad^*(G), \hspace{10mm} \rho \mapsto \mathcal{O}(\rho)
\]
is finite-to-one. If $(G,K)$ is a Gelfand pair, then on $\widehat{G}_K$, the correspondence is one-to-one. In \cite{BRfree} it is shown that for each $\rho \in \widehat{G}_K$, the intersection $\mathcal{K}(\rho):=\mathcal{O}(\rho) \cap \mathfrak{n}^*$ is a single $K$-orbit in $\mathfrak{n}^*$. 
\begin{definition}
\label{K-spherical orbits}
Let $\mathcal{A}(K,N)$ be the set of $K$-orbits in $\mathfrak{n}^*$ given by  
\[
\mc{A}(K,N):= \{ \mc{K}(\rho) |\ \rho \in \widehat{G}_K\}.
\]
We call $\mathcal{A}(K,N)$ the set of \textbf{$K$-spherical orbits} for the Gelfand pair $(K,N)$.
\end{definition}
A key result in \cite{BRfree} is that $K$-spherical orbits are in bijection with the collection of irreducible unitary representations of $G$ with $K$-fixed vectors. 
\begin{theorem}
\label{Unitary Representations and K-Spherical Orbits}
\cite{BRfree}
The map $\mc{K}:\widehat{G}_K \rightarrow \mc{A}(K,N)$ is a bijection. 
\end{theorem}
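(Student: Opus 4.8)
The plan is to get surjectivity straight from the definition of $\mathcal{A}(K,N)$ and to reduce injectivity to two facts recorded above: that the orbit mapping $\rho\mapsto\mathcal{O}(\rho)$ is one-to-one on $\widehat{G}_K$ (the Gelfand‑pair case of the Lipsman--Pukanszky correspondence \cite{Lip80,Lip82,Puk78}, applicable since $(K\ltimes N,K)$ is a Gelfand pair), and that for each $\rho\in\widehat{G}_K$ the slice $\mathcal{K}(\rho)=\mathcal{O}(\rho)\cap\mathfrak{n}^*$ is a single, in particular nonempty, $K$-orbit (shown in \cite{BRfree}).

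Surjectivity of $\mathcal{K}$ is immediate: by Definition \ref{K-spherical orbits}, $\mathcal{A}(K,N)$ is by construction the image of $\widehat{G}_K$ under $\mathcal{K}$. For injectivity, suppose $\rho,\rho'\in\widehat{G}_K$ with $\mathcal{K}(\rho)=\mathcal{K}(\rho')$. Since this common set is a single $K$-orbit it is nonempty, so we may pick $\xi\in\mathcal{K}(\rho)=\mathcal{O}(\rho)\cap\mathfrak{n}^*$. As $\mathcal{O}(\rho)$ is a single $\Ad^*(G)$-orbit containing $\xi$, we get $\Ad^*(G)\xi=\mathcal{O}(\rho)$; the same point lies in $\mathcal{K}(\rho')=\mathcal{O}(\rho')\cap\mathfrak{n}^*$, so $\Ad^*(G)\xi=\mathcal{O}(\rho')$ as well. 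Hence $\mathcal{O}(\rho)=\mathcal{O}(\rho')$, and one-to-one-ness of the orbit mapping on $\widehat{G}_K$ forces $\rho=\rho'$.

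Thus the weight of the theorem rests on its two inputs rather than on the bijection argument itself, and the genuine obstacle is the assertion that $\mathcal{O}(\rho)\cap\mathfrak{n}^*$ is a single $K$-orbit for $\rho\in\widehat{G}_K$. I would attack this through the Mackey machine for $G=K\ltimes N$: write $\rho$ in terms of a $K$-orbit $K\cdot\pi$ in $\widehat{N}$ together with an irreducible representation $\sigma$ of the stabilizer $K_\pi$ meeting the $K$-fixed-vector requirement; invoke Lipsman's description of $\mathcal{O}(\rho)$ \cite{Lip80,Lip82} as the $\Ad^*(G)$-saturation of $\lambda+\widetilde\nu$, where $\lambda\in\mathfrak{n}^*$ lies in the Kirillov orbit of $\pi$ and $\widetilde\nu$ extends a functional $\nu\in\mathfrak{k}_\pi^*$ attached to $\sigma$; and then determine which translates $\Ad^*(nk)(\lambda+\widetilde\nu)$ return to $\mathfrak{n}^*$. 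The restriction $\rho|_N$ is supported on the single $K$-orbit $K\cdot\pi$, which via Kirillov theory pins down the union of $N$-orbits met by $\mathcal{K}(\rho)$; the remaining, delicate, task is to show that the $\Ad^*(N)$-coupling between $\widetilde\nu$ and $\mathfrak{n}^*$ selects exactly one $K$-orbit inside that union, with the multiplicity-one property of the Gelfand pair eliminating any ambiguity in recovering $\sigma$. Once that is in hand, the bijection follows as above.
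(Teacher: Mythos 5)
Your proof is correct, and it tracks the way the paper itself frames the result. Note first that Theorem~\ref{Unitary Representations and K-Spherical Orbits} is cited from \cite{BRfree}; this paper does not reprove it, so there is no in-paper proof to compare against. What the paper does give, in the paragraph preceding Definition~\ref{K-spherical orbits}, are precisely your two inputs: the Lipsman--Pukanszky orbit map $\rho\mapsto\mathcal{O}(\rho)$ is one-to-one on $\widehat{G}_K$ when $(G,K)$ is a Gelfand pair, and for $\rho\in\widehat{G}_K$ the slice $\mathcal{K}(\rho)=\mathcal{O}(\rho)\cap\mathfrak{n}^*$ is a single (hence nonempty) $K$-orbit. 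Given those, surjectivity is indeed definitional, and your injectivity argument is sound: a point $\xi$ in the common slice lies in $\mathcal{O}(\rho)\cap\mathcal{O}(\rho')$, and since each $\mathcal{O}(\rho)$ is a single $\Ad^*(G)$-orbit by construction, $\mathcal{O}(\rho)=\Ad^*(G)\xi=\mathcal{O}(\rho')$, forcing $\rho=\rho'$. You correctly identify that the substantive content lives in the ``single $K$-orbit'' lemma rather than in the bijection step; your closing sketch of how to obtain it via the Mackey machine and Lipsman's description of $\mathcal{O}(\rho)$ is a reasonable outline of the kind of argument carried out in \cite{BRfree}, though as written it is a plan rather than a proof of that input, and you rightly defer to the reference for it.
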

The spherical functions for $(K,N)$ correspond with $\widehat{G}_K$. Indeed, to each representation of $G$ with a $K$-fixed vector, one can obtain a spherical function $\phi$ by forming the matrix coefficient for a $K$-fixed vector of unit length. This allows us to lift the map $\mc{K}$ to a well-defined map 
\[
\Psi:\Delta(K,N) \rightarrow \mathfrak{n}^*/K
\]
sending $\phi\mapsto \mc{K}(\rho^\phi)$, where $\rho^\phi \in \widehat{G}_K$ is the $K$-spherical representation of $G$ that gives us $\phi$. There is an alternate description of $\Psi$ which is preferable for calculations. The bounded spherical functions $\phi \in \Delta(K,N)$ are parameterized by pairs $(\pi, \alpha)$, where $\pi$ and $\alpha$ are irreducible unitary representations of $N$ and the stabilizer $K_\pi$ of $\pi$ in $K$, respectively. These are the \emph{Mackey parameters} described in Section \ref{Representation Theory of KN}. For the coadjoint orbit $\mathcal{O}=\mc{O}^N(\pi) \subset \mf{n}^*$ associated to $\pi$, one can define a \emph{moment map} $\tau_\mc{O} \rightarrow \mathfrak{k}^*_\pi$ in such a way that the image of $\tau_\mc{O}$ includes the $Ad^*(K_\pi)$-orbit $\mc{O}^{K_\pi}(\alpha)$ associated to the representation $\alpha \in \widehat{K_\pi}$ (see Section \ref{Moment Map and Spherical Points}). Moreover, one can choose a \emph{spherical point} $\ell_{\pi, \alpha} \in \mc{O}$ (Definition \ref{spherical point}) with $\tau_\mc{O}(\ell_{\pi, \alpha}) \in \mc{O}^{K_\pi}(\alpha)$ so that 
\[
\Psi(\phi_{\pi, \alpha})=K \cdot \ell_{\pi, \alpha}.
\]
This is the realization of $\Psi$ that we will use in future arguments. A corollary of Theorem \ref{Unitary Representations and K-Spherical Orbits} is the following. 
\begin{corollary} 
\label{Gelfand Space Orbit Bijection}
\cite{BRfree} The map $\Psi: \Delta(K,N) \rightarrow \mc{A}(K,N)$ is a bijection. 
\end{corollary}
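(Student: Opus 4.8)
The plan is to recognize $\Psi$ as a composite of two bijections. Let $\iota\colon \Delta(K,N)\to\widehat{G}_K$ be the map $\phi\mapsto\rho^\phi$, which — as recalled in the paragraph preceding the statement — sends a bounded $K$-spherical function to the equivalence class of the irreducible unitary representation of $G=K\ltimes N$ whose $K$-fixed matrix coefficient it is. By construction $\Psi=\mc{K}\circ\iota$, and Theorem~\ref{Unitary Representations and K-Spherical Orbits} asserts that $\mc{K}\colon\widehat{G}_K\to\mc{A}(K,N)$ is a bijection, so the corollary reduces to showing that $\iota$ is a bijection.

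To prove that $\iota$ is a bijection I would invoke the classical correspondence, for the Gelfand pair $(G,K)$, between bounded spherical functions on $G$ and the irreducible unitary representations of $G$ possessing a nonzero $K$-fixed vector — which, because $(G,K)$ is a Gelfand pair, automatically span a one-dimensional subspace — together with the standard identification of bounded spherical functions on $G$ (necessarily $K$-bi-invariant) with bounded $K$-spherical functions on $N$ via restriction (see Section~\ref{Representation Theory of KN}). Spelling this out: given $\rho\in\widehat{G}_K$, a unit vector $v$ spanning the line of $K$-fixed vectors is unique up to a unimodular scalar, so $g\mapsto\langle\rho(g)v,v\rangle$ is a well-defined bounded spherical function on $G$, and its restriction to $N$ is a bounded $K$-spherical function $\phi$ with $\rho^\phi\simeq\rho$; this simultaneously shows that $\iota$ is well defined on equivalence classes and that it is surjective. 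For injectivity, if $\rho^{\phi_1}\simeq\rho^{\phi_2}$ then, the $K$-fixed line being one-dimensional, the corresponding matrix coefficients coincide, whence $\phi_1=\phi_2$. Combining these, $\Psi=\mc{K}\circ\iota$ is a bijection.

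I do not anticipate a substantive obstacle here: the representation-theoretic content is carried entirely by Theorem~\ref{Unitary Representations and K-Spherical Orbits} and by the classical theory of Gelfand pairs, and the moment-map/spherical-point description of $\Psi$ mentioned earlier is not needed for this argument. The only points deserving care are the verification that $\iota$ is genuinely well defined as a map into $\widehat{G}_K$ — independence of the choice of $K$-fixed unit vector, and of the passage between spherical functions and representations — and that restriction from $G$ to $N$ induces a bijection between the two notions of bounded spherical function; both are routine and are essentially recorded in Section~\ref{Representation Theory of KN}.
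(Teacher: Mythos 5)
Your proof is correct and follows the paper's intended argument exactly: the paper presents this as a corollary of Theorem~\ref{Unitary Representations and K-Spherical Orbits} precisely because $\Psi = \mc{K}\circ\iota$, with $\iota\colon\Delta(K,N)\to\widehat{G}_K$ the standard bijection between bounded $K$-spherical functions and $K$-spherical representations for a Gelfand pair. The paper does not spell out the bijectivity of $\iota$ (it is classical and recorded in the cited reference), and your fleshing-out of that step is accurate and appropriately routine.
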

The compact-open topology on $\Delta(K,N)$ corresponds to the Fell topology on $\widehat{G}_K$. We give $\mc{A}(K,N)$ the subspace topology from $\mf{n}^*/K$. Note that $\mf{n}^*/K$ is metrizable since $K$ is compact. In \cite{BRfree}, the authors prove that $\Psi$ is a bijection for all nilpotent Gelfand pairs and a homeomorphism whenever $N$ is a Heisenberg group or a free group.  Further, it is conjectured that $\Psi$ is a homeomorphism for all nilpotent Gelfand pairs.  In this document, we show that this conjecture holds for a certain class of nilpotent Gelfand pairs. To describe exactly which $(K,N)$ our result applies to, we first establish some terminology.
\begin{definition}
\label{Spherical Central Orbits}
Let $(K,N)$ be a nilpotent Gelfand pair. Let $\mf{z}$ be the center of the Lie algebra $\mf{n}$ of $N$. We say that $(K,N)$ has \textbf{spherical central orbits} if generic orbits of the restricted action of $K$ on $\mf{z}$ are of codimension one. 
\end{definition}
 
In Section \ref{Preliminaries and Notation} we endow $\mf{n}=\mc{V} \oplus \mf{z}$ with an inner product $\langle \cdot, \cdot \rangle$ such that $\mf{z}=[\mf{n},\mf{n}]$, and $\mc{V} \perp \mf{z}$. If $(K,N)$ has spherical central orbits, then we can fix a unit base point $A \in \mf{z}$, and define a skew-symmetric form $(v,w) \mapsto \langle [v,w],A \rangle$ on $\mc{V}$.
\begin{definition}
\label{Non-Degenerate on V}
We say that a nilpotent Gelfand pair $(K,N)$ is \textbf{non-degenerate on} $\mc{V}$ if the skew-symmetric form 
\[
(v,w) \mapsto \langle [v,w],A \rangle
\]
is non-degenerate on $\mc{V}$. Here $v,w \in \mc{V}$ and $A \in \mf{z}$ is the fixed unit base point. 
\end{definition}

The main result of this paper is the following.
\begin{theorem}
\label{MainThm} 
Let $(K,N)$ be a nilpotent Gelfand pair with spherical central orbits that is non-degenerate on $\mc{V}$.  Then the map $\Psi$ is a homeomorphism.
\end{theorem}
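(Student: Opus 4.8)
The plan is to exploit the eigenvalue model of Theorem~\ref{EigenModel} as a common set of coordinates on both sides. Fix a generating set $(D_0,\dots,D_r)$ of $\D_K(N)$ obtained from generators $(p_0,\dots,p_r)$ of the $K$-invariant polynomials on $\mf{n}$. By Theorem~\ref{EigenModel} the map $\Phi\colon\phi\mapsto(\widehat{D}_0(\phi),\dots,\widehat{D}_r(\phi))$ is a homeomorphism of $\Delta(K,N)$ onto $\mc{E}(K,N)\subset\R^r$, and by Corollary~\ref{Gelfand Space Orbit Bijection} the map $\Psi$ is a bijection. Since $\mc{E}(K,N)$ and $\mc{A}(K,N)\subset\mf{n}^*/K$ are metrizable, showing that $\Psi$ is a homeomorphism reduces to two sequential statements: (A) if $K$-spherical orbits converge, $K\cdot\ell_n\to K\cdot\ell$ in $\mf{n}^*/K$, then the eigenvalue vectors of the attached bounded spherical functions converge (this is continuity of $\Psi^{-1}$, read through $\Phi$); and (B) if those eigenvalue vectors converge, then the orbits converge (continuity of $\Psi$, read through $\Phi$).

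For (A) the idea is that the two standing hypotheses make the relevant spherical functions and eigenvalues explicit enough to take limits. Because $(K,N)$ has spherical central orbits, the $K$-action on $\mf{z}^*\cong\mf{z}$ admits the one-parameter cross section $\{tA:t\ge 0\}$ through the base point $A$, so every central character is $K$-conjugate to some $tA$; because $(K,N)$ is non-degenerate on $\mc{V}$, for each $t>0$ the form $(v,w)\mapsto t\langle[v,w],A\rangle$ is symplectic on $\mc{V}$. Hence along the generic stratum (non-zero central character $\lambda$) the representation $\pi_\lambda$ of $N$ is of Heisenberg type, and $\phi_{\pi_\lambda,\alpha}$ is, up to rescaling, a bounded spherical function of a Heisenberg nilpotent Gelfand pair — a Laguerre-type special function whose $\widehat{D}_j$-eigenvalue is an explicit, real-analytic function of the spherical point $\ell_{\pi_\lambda,\alpha}$. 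The degenerate stratum $\lambda=0$ consists of unitary characters $\pi$ of $N$, for which $\phi_{\pi,\alpha}$ is a bounded spherical function for the linear action of $K$ on $\mc{V}$, and the eigenvalue formulas extend continuously to this stratum. Continuity in the orbit then follows by combining these formulas with continuity of the spherical point in the orbit.

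For (B), I would prove that $\Psi$ is proper: if $\Phi(\phi_n)$ stays bounded in $\R^r$, then the orbits $\Psi(\phi_n)$ stay in a compact subset of $\mf{n}^*/K$. This is where non-degeneracy on $\mc{V}$ is essential — one may arrange the generators so that some $p_i$ restricts on $\mf{z}$ to $|\lambda|^2$ and some $p_j$ measures the radius of the orbit in the $\mc{V}$-directions, and then the formulas of step (A) show that $\widehat{D}_i(\phi_n)$ and $\widehat{D}_j(\phi_n)$ dominate the size of $\Psi(\phi_n)$. Granting this, suppose $\Phi(\phi_n)\to\Phi(\phi)$; then $\phi_n\to\phi$ in $\Delta(K,N)$ and the orbits $\Psi(\phi_n)$ lie in a fixed compact set, so some subsequence converges, $\Psi(\phi_{n_k})\to o^*\in\mf{n}^*/K$. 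A short argument — using (A), the injectivity of $\Psi$ and $\Phi$, and the fact (which I would establish alongside the eigenvalue model) that $\mc{A}(K,N)$ is closed in $\mf{n}^*/K$ — identifies $o^*$ with $\Psi(\phi)$. Since every subsequence has a further subsequence with limit $\Psi(\phi)$, we get $\Psi(\phi_n)\to\Psi(\phi)$, which is (B).

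The principal obstacle is the behavior across the boundary between the two strata, i.e.\ along sequences with central character $\lambda_n\to 0$. There a sequence of infinite-dimensional representations $\pi_{\lambda_n}$ collapses onto a one-parameter family of characters, the Mackey parameters $\alpha_n\in\widehat{K_{\pi_n}}$ may escape to infinity in a manner tied to $\lambda_n\to 0$, and the rescaled Laguerre-type spherical functions must be shown to converge, uniformly on compact sets and with matching eigenvalue asymptotics, to the corresponding Bessel-type limit functions. Establishing uniform estimates that survive the joint limit $\lambda_n\to 0$, $\alpha_n\to\infty$ — so that both the eigenvalue formula of step (A) and the properness bound of step (B) pass to the limit — is the technical core of the argument, and it is exactly here that ``spherical central orbits'' (the degeneration runs along a single one-parameter family) and ``non-degenerate on $\mc{V}$'' (the generic fibers are genuine Heisenberg fibers, where the needed special-function estimates are available) are both used in an essential way.
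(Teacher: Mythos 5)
Your top-level strategy is the same as the paper's: use the eigenvalue model of Theorem~\ref{EigenModel} as a common coordinate chart, reduce the claim to sequential statements about convergence in $\mc{E}(K,N)$ versus in $\mf{n}^*/K$, and recognize that the delicate part is the boundary between the generic ($\lambda>0$) and degenerate ($\lambda=0$) strata. You also correctly observe that $p_0$ (controlling $|\lambda|$) and a polynomial like $p_1=|v|^2$ together control the radius of the spherical point. These are all the right ingredients.

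However, there is a genuine gap. You identify the joint limit $\lambda_n\to 0$, $\m(n)\to\infty$ as ``the technical core of the argument,'' and then leave it unresolved, gesturing at ``Laguerre-type'' and ``Bessel-type'' asymptotics. The paper does \emph{not} proceed through explicit special-function estimates. The mechanism that actually makes the boundary case work is Lemma~\ref{top} (together with Corollary~\ref{top2} and Lemma~\ref{type2}): the eigenvalue $\widehat{D}_p(\phi_{1,\m})$ is a polynomial $\widetilde{p}(\m)$ of degree $s_p/2$ whose \emph{top-degree} homogeneous part satisfies $\tp\widetilde p(\m)=(-1)^{s_p/2}p(v_\m,\overline v_\m)$ for a well-adapted spherical point $v_\m$. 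Combined with the rescaling identity $\widehat D_p(\phi_{\lambda,\m})=|\lambda|^{s_p/2}\widehat D_p(\phi_{1,\m})$, this is exactly what lets one prove that when $\lambda_n\to 0$ with $\lambda_n|\m(n)|$ bounded, the rescaled lower-order terms vanish and the eigenvalues converge to the values of the invariants on the type~II spherical point. The paper also handles the mixed invariants (those involving $\mf{z}$-coordinates) separately, noting that their values and eigenvalues both tend to zero as $\lambda_n^{z_p}\to 0$. None of this algebraic machinery appears in your sketch; without it the argument does not close.

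Your direction (B) also departs from the paper in a way that introduces an unproved assumption. You propose to show that $\Psi$ is proper and then extract a convergent subsequence $\Psi(\phi_{n_k})\to o^*$ in $\mf{n}^*/K$; to identify $o^*$ with $\Psi(\phi)$ you appeal to ``the fact \ldots\ that $\mc{A}(K,N)$ is closed in $\mf{n}^*/K$.'' This is not established in the paper, is not obviously true (a type~I spherical orbit accumulating on a point with $\lambda>0$ that is not a spherical point is not immediately excluded without the discreteness of $\m$ in hand), and in any case is unnecessary: the paper argues both directions of the biconditional directly by case analysis on type~I/type~II sequences, using Theorem~\ref{invariants} (invariant polynomials separate $K$-orbits) to pass from convergence of $\{p(\ell_n)\}$ for all invariant $p$ to convergence of the orbits $K\cdot\ell_n$. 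You should either prove the closedness claim or, better, replace the properness-plus-compactness route with the direct argument.
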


The first and second blocks of Table $1$ in \cite{FRY1} consist of nilpotent Gelfand pairs satisfying Definitions \ref{Spherical Central Orbits} and \ref{Non-Degenerate on V} and the extra condition that the action of $K$ on $\mc{V}$ is irreducible. These examples of nilpotent groups are all of type $H$. However, our proof of Theorem \ref{MainThm} does \emph{not} make this irreducibility assumption, so our argument holds for a larger class of Gelfand pairs than those listed in the first two blocks of \cite{FRY1}.  

This document is organized in the following way. Section \ref{Preliminaries and Notation} establishes notation and conventions. Section \ref{Heisenberg Gelfand Pairs} reviews the orbit model for Heisenberg Gelfand pairs. We include this information because certain parts of the proof of our main result reduce to this setting. Section \ref{Representation Theory of KN} describes the representation theory of the semi-direct product group $K \ltimes N$ to give a parameterization of the Gelfand space of $(K,N)$. In Section \ref{Moment Map and Spherical Points} we define a moment map on coadjoint orbits in order to calculate spherical points. The proof of our main result can be found in Section \ref{The Orbit Model}. Section \ref{Example} provides a detailed example.

\renewcommand\thesubsection{\arabic{subsection}}
\subsection{Preliminaries and Notation}
\label{Preliminaries and Notation}
\begin{itemize}
\item Throughout this document, $N$ is a connected and simply connected $2$-step nilpotent Lie group, $K$ is a (possibly disconnected) compact Lie subgroup of the automorphism group of $N$, and $(K,N)$ is a nilpotent Gelfand pair. In Sections \ref{Representation Theory of KN} and \ref{Moment Map and Spherical Points} we start by describing results which apply to \emph{all} nilpotent Gelfand pairs, then in the second half of each section restrict our attention to nilpotent Gelfand pairs satisfying Definitions \ref{Spherical Central Orbits} and \ref{Non-Degenerate on V}. In statements of theorems, we always indicate the restrictions we are making on $(K,N)$.  

\item Let $G=K\ltimes N$ be the semidirect product of $K$ and $N$, with group multiplication
\[
(k,x)(k',x')=(kk', x(k \cdot x')).
\]
\item We denote Lie groups by capital Roman letters, and their corresponding Lie algebras by lowercase letters in fraktur font. We identify $N$ with its Lie algebra $\mf{n}$ via the exponential map. We denote the derived action of $\mf{k}$ on $\mf{n}$ by $A \cdot X$ for $A \in \mf{k}$ and $X \in \mf{n}$.
\item We denote by $\widehat{H}$ the unitary dual of a Lie group $H$. We identify representations that are unitarily equivalent, and we do not distinguish notationally between a representation and its equivalence class.
\item We denote the coadjoint actions of a Lie group $H$ and its Lie algebra $\mf{h}$ on $\mf{h}^*$ by 
\begin{align*}
Ad^*(h) \varphi &= \varphi \circ Ad(h^{-1}), \text{ and } \\
ad^*(X)\varphi(Y) &=\varphi \circ ad(-X)(Y)=-\varphi([X,Y])
\end{align*}
for $h \in H$, $\varphi \in \mf{h}^*$, and $X,Y \in \mf{h}$. 
\item We use the symbol $\mc{O}$ to denote a coadjoint orbit in $\mathfrak{n}^*$. If $x \in \mf{n}^*$, $\mc{O}_x$ denotes the coadjoint orbit containing $x$. If such an orbit is determined by a parameter $\lambda$, we sometimes refer to the orbit as $\mc{O}_\lambda$.  
\item We fix a $K$-invariant positive definite inner product $\langle \cdot , \cdot \rangle$ on $\mf{n}$, and let $\mc{V}=\mf{z}^\perp$ so that $\mf{n}=\mc{V}\oplus \mf{z}$. Here $\mf{z}=\text{Lie}Z$ is the center of $\mf{n}$. We identify $\mf{n}$ and $\mf{n}^*$ via this inner product. 
\item We fix a unit base point $A \in \mf{z}$, and use it to define a form 
\[
(v,w) \mapsto \langle [v,w],A \rangle
\]
on $\mc{V}$. The letter $A$ will be used throughout the document to refer to this fixed unit base point. 
\end{itemize}

\section{Heisenberg Gelfand Pairs}
\label{Heisenberg Gelfand Pairs}

In this section we will describe the orbit model for Gelfand pairs  of the form $(K,H_a)$ where $H_a$ is a Heisenberg group. As noted in the introduction, a more complete discussion of the situation reviewed here can be found in \cite{BJR2,BJRW,GEOM}. We cover this case because our proof of Theorem \ref{MainThm} uses the proof of this same result when $N$ is a Heisenberg group.

\subsection{Spherical Functions and the Eigenvalue Model}
\label{Heisenberg Spherical Functions and the Eigenvalue Model}

In the rest of this subsection, as well as the next one, let $H_V = V \oplus \R$, where $V$ is a  complex vector space. Let $\langle \cdot,\cdot \rangle$ be a positive definite Hermitian inner product on $V$. The Lie algebra, $\mathfrak{h}_V$, of $H_V$ can be expressed as 
\[
\mathfrak{h}_V = V \oplus \R \text{ with Lie bracket $[(v,t),(v',t')] = (0,-\Im \langle v,v'\rangle)$.} 
\]
Let $K$ be a compact subgroup of the unitary group $U(V)$ for $V$ such that $(K, H_V)$ is a nilpotent Gelfand pair. This is equivalent to the fact that  the action of $K$ on $V$ is a linear multiplicity free action \cite{BJR1}. The group $K$ acts on $H_V$ via 
\[
k \cdot (v,t) = (kv,t). 
\]
We start by reviewing some facts about the multiplicity free action $K:V$. Let $T \subset K$ be a maximal torus in $K$ with Lie algebra $\mf{t} \subset \mf{k}$, and let $\mf{h}:=\mf{t}_\C$ be the corresponding Cartan subalgebra in $\mf{k}_\C$. Denote by $H$ the corresponding subgroup of the complexified group $K_\C$, and let $B$ be a fixed Borel subgroup of $K_\C$ containing $H$. Let $\Lambda \subset \mf{h}^*$ be the set of highest $B$-weights for irreducible representations of $K_\C$ (or, equivalently, $K$) occurring in $\C[V]$, and denote by 
\[
\C[V]=\bigoplus_{\alpha \in \Lambda} P_\alpha
\]
the decomposition of $\C[V]$ into irreducible subrepresentations of $K_\C$ (and $K$). The subspaces $P_\alpha$ consist of homogeneous polynomials of a fixed degree. For $\alpha \in \Lambda,$ denote by $|\alpha|$ the degree of homogeneity of polynomials in $P_\alpha$, denote by $d_\alpha$ the dimension of $P_\alpha$, and fix a $B$-highest weight vector $h_\alpha \in P_\alpha$ (unique modulo $\C^\times$). If $\mc{P}_m(V)$ is the space of holomorphic polynomials on $V$ of homogeneous degree $m$, then $P_\alpha \subset \mc{P}_{|\alpha|}(V)$. The highest weights in $\Lambda$ are freely generated by a finite set of fundamental weights $\{\alpha_1, \ldots, \alpha_r\}$ \cite{GEOM}, which have the property that the corresponding polynomial $h_i:=h_{\alpha_i}$ is irreducible. It follows that the highest weight vector $h_\alpha$ in each invariant subspace $P_\alpha$ has the form 
\[
h_{\bf m} = h_1^{m_1}\cdots h_r^{m_r} \text{ with highest weight $\alpha_{\bf m} = m_1\alpha_1 + \cdots + m_r\alpha_r$.} 
\]

To understand the orbit model for $\Delta(K,H_V)$, we review the representation theory of $K\ltimes H_V$. The irreducible unitary representations of $H_V$ are classified by Kirillov's ``orbit method'' \cite{Kir}\footnote{An explicit review of the orbit method is provided in Section \ref{Representation Theory of KN} for two-step nilpotent groups.}. One sees that the irreducible unitary representations of $H_V$ naturally split into two types according to their parametrization by the coadjoint orbits in $\mf{h}_V^{\ast}$. In particular, the ``type I'' representations $\rho_{\lambda}$ are indexed by real numbers $\lambda \neq 0$, with associated coadjoint orbit (which we refer to as a ``type I" orbit) 
\[
\mc{O}_\lambda = V \oplus \{\lambda\}, 
\] 
and the ``type II'' representations $\chi_b$ correspond to one-point orbits $\mc{O}_b=\{(b,0)\}$ for $b \in V$. 

A type I representation $\rho_\lambda \in \widehat{H}_V$ can be realized in the  Fock space $\mc{F}_V$, which is the $L^2$- closure of $\C[V]$ endowed with a Gaussian measure.  For $k\in K$, we have  $\rho_\lambda\circ k\cong \rho_\lambda$  and the intertwining  map is the natural action of $K$ on $\C[V].$ From the type I representation $\rho_\lambda \in \widehat{H}_V$, a  type I spherical function can be constructed by taking the $K$-averaged matrix coefficient for $\rho_\lambda$ on some $P_{\alpha}$ with $\alpha=\alpha_\m$ for some $\m$.  The type I spherical functions are denoted  $\phi_{\lambda,\m}$ with $\lambda\ne 0$ and $\m\in (\Z_{\geq 0})^r $.

Type II representations are one-dimensional characters $\phi_b(v,t) = e^{i\langle b, v \rangle}$. From such a type II representation, a spherical function $\phi_b$ can be constructed by the $K$-average
\[
\phi_b(v) := \int_K e^{i\langle b,k\cdot v\rangle } dk,
\]
so that the Type II spherical function $\phi_b$ only depends on the $k$-orbit through $b$.
From this we have the following parametrization of the Gelfand space: 
\begin{equation}
\label{HGelfandSpace}
\Delta(K,H_V) \leftrightarrow \{(\lambda,{\bf m})|\lambda \neq 0,\, {\bf m} \in (\Z_{\geq 0})^r \} \cup (V/K).
\end{equation}

We now demonstrate how to calculate $\Phi(\phi)$, where $\Phi: \Delta(K,H_a) \to \mc{E}(K,H_a)$ as in Section \ref{Introduction} and $\phi \in \Delta(K,H_a)$. Aligning notation with \cite{GEOM}, we denote by $\C[V_\R]^K$ the algebra of $K$-invariant polynomials on the underlying real vector space for $V$, and $\mc{PD}(V)^{K}$ the space of $K$-invariant polynomial coefficient differential operators on $V$. By Schur's lemma, each differential operator $D \in \mc{PD}(V)^{K}$ acts on the $K$-irreducible subspace $P_{\alpha_\m}$ by a scalar. In \cite{BRfree}, it is shown that
\[
\rho_\lambda(D)h_\m = \widehat{D}(\phi_{\lambda,\m})h_\m. 
\]
In \cite[\S 7]{BR04}, canonical bases for $\C[V_\R]^K$ and $\mc{PD}(V)^{K_\C}$ are constructed in the following way. For each irreducible subspace $P_\alpha$ fix an orthonormal basis $\{q_j: 1 \leq j \leq d_\alpha\}$ with respect to the Fock inner product on $\C[V]$. Then define 
\[
p_{\alpha}(v) := \sum_{j=1}^{d_{\alpha}} q_j(v)\overline{q}_j(\overline{v}). 
\]
The polynomial $p_\alpha$ is homogeneous of degree $2|\alpha|$. 
Write $p_\alpha(v, \overline{v})$ for the polynomial $p_\alpha(v)$, and construct a differential operator $p_\alpha(v, \partial) \in \mc{PD}(V)^{K}$ by substituting $\partial_i$ for $\overline{v_i}$ and letting derivatives act to the right of multiplication. This differential operator $p(v, \partial)$ is homogeneous of degree $|\alpha|$.  Then the sets $\{p_\alpha : \alpha \in \Lambda\}$ and $\{p_\alpha(z, \partial)\}$ form bases for $\C[V_\R]^K$ and $\mc{PD}(V)^{K}$ respectively. Note that $\C[V_\R]^K$ is generated by the set $\{p_{\alpha_1},\hdots, p_{\alpha_r}\}$, where $\{\alpha_1, \ldots, \alpha_r\}$ are the fundamental weights introduced at the beginning of this section.

We use the bases for $\C[V_\R]^K$ and $\mc{PD}(V)^{K}$ above to construct a generating set for the algebra $\mathbb{D}_K(H_V)$. We can consider a $K$-invariant polynomial $p$ on $V$ to be a $K$-invariant polynomial on $H_V$ by letting $p$ act trivially on the center.
For any $K$-invariant homogeneous polynomial $p(v, \overline{v})$ of even degree $s_p$ on $V$, we construct a differential operator $D_p \in \D_K(H_V)$ so that for a type $I$ representation $\rho \in \widehat{H}_a$ realized on Fock space,
\begin{equation}
\label{evalue}
\rho_\lambda(D_p) =  (-2\lambda)^{s
_p/2}p(v, \partial).
\end{equation}

Define the $K$-invariant polynomial $p_0$ on $H_V$ by $p_0(v, t)=-it$, and let $p_i:=p_{\alpha_i}$ be the generating elements for $\C[V_\R]^K$ constructed above, considered as $K$-invariant polynomials on $H_V$. We choose labeling so that $p_1(v)=|v|^2$. The set $\{p_0, \ldots, p_r\}$ forms a homogeneous generating set for the $K$-invariant polynomials on $N$, where the degree of homogeneity of $p_i$ is $2|\alpha_i|$ for $1 \leq i \leq r$. From $p_0$ we construct the differential operator $D_0 = -i \frac{\partial}{\partial t} \in \D_K(H_V)$, and from $p_i$ construct $D_i \in \D_K(H_V)$ in the process outlined above. The resulting set $\{D_0, \ldots, D_r\}$ forms a homogeneous generating set for $\D_K(H_V)$, where the degree of homogeneity of $D_i$ is $2|\alpha_i|$ for $1 \leq i \leq r$. Note that $\rho_\lambda (D_i)$ is a differential operator of degree $|\alpha_i|$ acting on $\C[V]$.

The bounded $K$-spherical functions in $\Delta(K,H_V)$ are eigenfunctions for operators in the algebra $\D_K(H_V)$, and our next step is to examine the corresponding eigenvalues. We know from \cite{BJR2} that the type I spherical function $\phi_{\lambda, \m}$  is $\phi_{\lambda, \m}(v,t)=e^{i\lambda t} q_\alpha(\sqrt{|\lambda|}v)$\footnote{Here $q_\alpha \in \C[V_\R]^K$ is a certain $K$-invariant polynomial on $V_\R$ whose top homogeneous term is equal to $(1/d_\alpha)p_{\alpha_\m}$. See \cite{BJR2} for details of this construction.}, so for any homogeneous $K$-invariant polynomial $p$ (of even degree $s_p$) on $V$, 
\begin{equation}
\label{lambdatoone}
\widehat{D}_p(\phi_{\lambda, \m}) = |\lambda|^{s_p/2}\widehat{D}_p(\phi_{1,\m}).
\end{equation}
To compute the eigenvalue of an operator $D_p \in \D_K(H_a)$ on $\phi_{1,\m}$, one computes the action of $D_p$ on the highest weight vector $h_{\m}$ in the corresponding irreducible $K$-subspace $P_{\alpha_\m}$. (See \cite[\S 4]{BR98} for more details on this calculation.) We have 
\[
\rho(D_p)h_\m(v) = (-2)^{s_p/2}p(v, \partial) h_\m(v) = \widehat{D}_p(\phi_{1,\m})h_\m(v).
\]
Therefore the eigenvalues $\widehat{D}_p(\phi_{1,\m})$ are polynomials (not necessarily homogeneous) in $\m=(m_1, \ldots, m_r)$ of degree $s_p/2$. To emphasize this, we define 
$$\widetilde{p}(\m):=\widehat{D}_p(\phi_{1,\m})$$
 to be this (degree $s_p/2$) polynomial. Note that the differential operator $D_p$ can be defined by other methods (quantizations) and that this only affects the lower order terms in the eigenvalue polynomial $\widetilde p$.

We end this subsection by calculating $\widehat{D}_0(\phi_{\lambda,m})$ and $\widehat{D}_1(\phi_{1,m})$. These eigenvalues are explicitly used in the proof of Theorem \ref{MainThm} in Section \ref{The Orbit Model}.

\begin{example}
\label{D0}
Applying the operator $D_0=-i\frac{\partial}{\partial t}$ to $\phi_{\lambda, \m}(v,t)=e^{i\lambda t} q_\alpha(\sqrt{|\lambda|}v)$ we compute
\[
\widehat{D}_0(\phi_{\lambda,{\bf m}}) = \lambda.
\] 
To compute $\widehat{D}_1(\phi_{1,{\bf m}})$, we notice that since $p_1(v, \overline{v})=|v|^2$, $p_1(v,\partial) = \sum v_i\partial_i$ is the degree operator. Hence, 
\[
\widehat{D}_1(\phi_{1,{\bf m}}) = -2|\lambda||{\bf m}|,
\]
where $|{\bf m}|:=|\alpha_\m|$ is the degree of the polynomials in $P_{\alpha_{\bf m}}$.
\end{example}

\subsection{The Orbit Model}
\label{Heisenberg Orbit Model}

In this section, we describe the orbit model for Heisenberg Gelfand pairs. These results first appeared in \cite{GEOM}, and were reviewed in \cite{BRfree}. We begin by introducing some general terminology involving moment maps, then specialize to the Heisenberg setting and define moment maps on coadjoint orbits. 

Let $V$ be a Hermitian vector space and $K \subset U(V)$. We define the moment map $\tau:V \rightarrow \mathfrak{k}^*$ by 
\[
\tau(v)(Z)=-\frac{1}{2}\langle v, Z \cdot v \rangle
\]
for $v \in V$ and $Z \in \mathfrak{k}$. Then $\tau$ is $K$-equivariant, and it is known that the action of $K$ on $\C[V]$ is multiplicity-free if and only if $\tau$ is one-to-one on $K$-orbits. We make this assumption, and let 
\[
\C[V]=\sum_{\alpha \in \Lambda} P_\alpha
\]
be the multiplicity free decomposition. Following \cite[\S 2.4]{GEOM}, we associate a coadjoint orbit $\mc{O}_\alpha$ in $\mf{k}^*$ to each irreducible subspace $P_\alpha$ by first extending $\alpha \in \Lambda \subset \mf{t}^*$ to a real-valued linear functional on $\mf{k}$ by
\[
\alpha_\mf{k}(Z) = 
\begin{cases}
-i\alpha(Z) \text{ if } Z \in \mf{t} \\
0 \text{ if } Z \in \mf{t}^\perp
\end{cases},
\]
and setting $\mc{O}_\alpha = \Ad^*(K)\alpha_{\mf{k}}$. Here $\mf{t}^\perp$ is the orthogonal complement of $\mf{t}$ with respect to a fixed $\Ad(K)$-invariant inner product on $\mf{k}$. By \cite[Prop. 4.1]{BJLR97}, each coadjoint orbit $\mc{O}_\alpha$, $\alpha\in\Lambda$, lies in the image of $\tau$. 

\begin{definition}
\label{spherical point}
If $\alpha \in \Lambda$ is a positive weight of the $K$ action on $P(V)$, then \textbf{a spherical point of type $\alpha$} is any point $v_{\alpha} \in V$ such that $\tau(v_{\alpha}) = \alpha_\mf{k}$.
\end{definition}

Since the action of $K_\C$ on $V$ is multiplicity-free, there is an open Borel orbit in $V$. Let $B$ be a Borel subgroup of $K_\C$ with Lie algebra $\mathfrak{b}$ as defined in Section \ref{Heisenberg Spherical Functions and the Eigenvalue Model}, and let $v_\alpha$ be a spherical point of type $\alpha$. Then the highest weight vector $h_\alpha \in P_\alpha$ is a weight vector for $\mathfrak{b}$. That is, we can extend $\alpha$ to $\mathfrak{b}$ so that $Z \cdot h_\alpha =\alpha(Z)h_\alpha$ for all $Z \in \mathfrak{b}$.  

\begin{definition}
We say that the spherical point $v_{\alpha}$ is \textbf{well-adapted} if 
\[ 
h_{\alpha}(v_{\alpha}) \neq 0 \hspace{4mm} \text{ and if } \hspace{5mm} 2\partial_ih_{\alpha}(v_{\alpha}) = \overline{(v_{\alpha}})_i h_{\alpha} (v_{\alpha}). 
 \]
\end{definition}

We provide some motivation for this definition by showing that any $v_{\alpha}$ in the $B$-open orbit is well-adapted. We note that this will be true for generic $\alpha$, but is not necessarily true for the generators of $\Lambda$. Since $h_{\alpha}$ is a semi-invariant for $B$, we must have $h_{\alpha}(v_{\alpha}) \neq 0$. For $Z \in \mathfrak{b}$, we have 
\[ 
Z \cdot h_{\alpha} = \alpha(Z)h_{\alpha} = \tau(v_{\alpha})(Z)h_{\alpha} = -\frac{1}{2}\langle v_{\alpha}, Z \cdot v_{\alpha} \rangle. 
\]
Additionally, we have 
\[
Z \cdot h_{\alpha}(v_{\alpha}) = \left. \frac{d}{dt}\right|_0 h_{\alpha}(\exp(-tZ)\cdot v_{\alpha}) = -\partial_{Z\cdot v_{\alpha}} h_{\alpha}(v_{\alpha}). 
\]
By openness, any derivative is of the form $\partial_{Z\cdot v_{\alpha}}$, including those in the basis directions. Thus we have 
\[
2\partial_ih_{\alpha}(v_{\alpha}) = \overline{(v_{\alpha}})_i h_{\alpha} (v_{\alpha}).
\]
It has been shown that all multiplicity free actions have well-adapted spherical points. 

Now we apply these results to the Heisenberg Gelfand pair $(K, H_a)$ to calculate the spherical points and define the map $\Psi$ described in Section \ref{Introduction}. The space $V$ is a Hermitian vector space with form $\langle \cdot , \cdot \rangle$, and $K\subset U(V)$, so by the preceding paragraphs, we have a moment map $\tau:V \rightarrow \mathfrak{k}^*$ defined as above. We use this moment map to define moment maps on coadjoint orbits. For a type I orbit $\mathcal{O}_\lambda = V\oplus \{\lambda\}$, the moment map $\tau_\lambda: \mathcal{O}_\lambda \rightarrow \mathfrak{k}^*$ is given by 
\begin{equation}
\label{heisenberg moment maps}
\tau_\lambda(v, \lambda) = \frac{1}{\lambda}\tau(v).
\end{equation}

This relationship lets us compute the spherical points in a type I coadjoint orbit $\mathcal{O}_\lambda= V\oplus\{\lambda\}$ for the moment map $\tau_\lambda$, which we will use to define our orbit model. For each $\alpha_\m \in \Lambda$ (the set of highest weights of the representation of $K$ on $\C[V]$), choose $v_\m \in V$ with $\tau(v_\m)={\alpha_\m}_\mf{k}$. Note that all choices for $v_\m$ will be in the same $K$-orbit. Then the point $(\sqrt{\lambda}v_\m, \lambda) \in \mc{O}_\lambda$ is a spherical point of $\tau_\lambda$ of type $\alpha_\m$. 

Next we relate the values of $K$-invariant polynomials on these spherical points to the eigenvalues described in Section \ref{Heisenberg Spherical Functions and the Eigenvalue Model}. This relationship is the key observation that drives our main argument in Section \ref{The Orbit Model}. Recall from Section \ref{Heisenberg Gelfand Pairs} that $\widetilde{p}(\m)=\widehat{D}_p(\phi_{1,\m})$ is a degree $s_p/2$ polynomial on $V$. Write $\tp \widetilde{p}(\m)$ for the highest order homogeneous term in $\m$.  

\begin{lemma}\label{top}\cite{GEOM} Let $p(v,\overline v)$ be a $K$-invariant polynomial on $V$, homogeneous of degree $s_p$. 
Given a well-adapted spherical point  $v_{\m} \in V$ of type $\alpha_\m$ for the moment map $\tau:V\rightarrow \mathfrak{k}^*$, we have
$$
 \tp \widetilde{p}(\m)=(-1)^{s_p/2}p(v_{\m},\overline{v}_{\m}) .
$$
\end{lemma}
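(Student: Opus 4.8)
The plan is to start from the eigenvalue identity recorded above and to read off its top homogeneous part in $\m$. Write $p(v,\overline v)=\sum_{|I|=|J|=s_p/2}c_{IJ}\,v^{I}\overline v^{\,J}$ — here $p$ is bihomogeneous of bidegree $(s_p/2,s_p/2)$, being a polynomial in the $p_{\alpha_i}$, each of which has this shape — so that $p(v,\partial)=\sum_{I,J}c_{IJ}\,v^{I}\partial^{J}$ with all derivatives acting to the right. The relation $\rho(D_p)h_\m=(-2)^{s_p/2}p(v,\partial)h_\m=\widetilde p(\m)h_\m$, evaluated at the well-adapted spherical point $v_\m$ (where $h_\m(v_\m)\neq 0$, so we may divide), gives
\[
\widetilde p(\m)=(-2)^{s_p/2}\sum_{I,J}c_{IJ}\,(v_\m)^{I}\,\frac{(\partial^{J}h_\m)(v_\m)}{h_\m(v_\m)}.
\]

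Next I would expand $(\partial^{J}h_\m)/h_\m$ via the standard set-partition (cumulant / Fa\`a di Bruno) expansion $\tfrac{\partial^{J}f}{f}=\sum_{\pi}\prod_{B\in\pi}c_B(f)$, where $\pi$ runs over partitions of the multi-index $J$ into blocks $B$ and $c_B(f)=\partial^{B}\log f$. Two properties of these coefficients are essential. First, they are additive under products of functions, so for $f=h_\m=h_1^{m_1}\cdots h_r^{m_r}$ we have $c_B(h_\m)=\sum_\ell m_\ell\,c_B(h_\ell)$, linear in $\m$. Second, for a singleton block well-adaptedness of $v_\m$ gives $c_{\{j\}}(h_\m)(v_\m)=\tfrac{(\partial_j h_\m)(v_\m)}{h_\m(v_\m)}=\tfrac12\,\overline{(v_\m)_j}$.

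Then comes the degree bookkeeping. Since $\widetilde p(\m)$ is a polynomial in $\m$ of degree $s_p/2$, its top part $\tp\widetilde p(\m)$ is the coefficient of $t^{s_p/2}$ in $\widetilde p(t\m)$, and I would compute this coefficient from the displayed formula by scaling $\m\mapsto t\m$ with $t\in\Z_{\ge1}$. Then $h_{t\m}=h_\m^{\,t}$, and because the moment map $\tau$ is homogeneous of degree $2$, a well-adapted spherical point of type $\alpha_{t\m}=t\alpha_\m$ is $\sqrt t\,v_\m$ (well-adaptedness survives the rescaling, by homogeneity of $h_\m$). Substituting, the summand indexed by $(I,J,\pi)$ in $\widetilde p(t\m)$ becomes $t^{|\pi|}$ times a constant independent of $t$: each block contributes one factor $t$ (from $h_{t\m}=h_\m^{\,t}$), the homogeneity of $h_\m$ makes $c_B$ scale by $t^{-|B|/2}$ under $v\mapsto\sqrt t\,v$ for an aggregate $t^{-|J|/2}=t^{-s_p/4}$, and $(\sqrt t\,v_\m)^{I}$ contributes $t^{|I|/2}=t^{s_p/4}$, which cancels it — leaving $t^{|\pi|}$. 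Hence only the partition of $J$ into $|J|=s_p/2$ singletons contributes at order $t^{s_p/2}$, and collecting that contribution together with the singleton identity above yields
\[
\tp\widetilde p(\m)=(-2)^{s_p/2}\Big(\tfrac12\Big)^{s_p/2}\sum_{I,J}c_{IJ}\,(v_\m)^{I}\,\overline{(v_\m)}^{\,J}=(-1)^{s_p/2}p(v_\m,\overline v_\m).
\]

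The main obstacle is precisely this last bookkeeping. The quantity $\widetilde p(\m)$ is an honest polynomial in $\m$, but the right-hand side of the spherical-point formula is assembled from $v_\m$, which depends on $\m$ through square roots, so ``top homogeneous term in $\m$'' is not literally the naive leading term of the expression; reconciling the two needs the scaling analysis above. That analysis rests on two homogeneity facts — $\tau$ is homogeneous of degree $2$ (so a spherical point of type $t\alpha_\m$ is the rescaling $\sqrt t\,v_\m$), and $h_\m$ is homogeneous (so each $c_B(h_\m)$ scales by a fixed power of $t$ under $v\mapsto\sqrt t\,v$) — plus the additivity of the $c_B$ in $\m$. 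A secondary routine check is that $\sqrt t\,v_\m$ is again well-adapted; this also shows one may compute with any well-adapted representative, which is consistent since both $\widetilde p(\m)$ and $p(v_\m,\overline v_\m)$ depend only on the $K$-orbit of $v_\m$. Everything else — the cumulant expansion itself, and the multi-index algebra — is standard.
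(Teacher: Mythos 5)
Your proof is correct and pivots on the same core observation as the paper's (\cite{GEOM}): the leading-in-$\m$ contribution to $(\partial^J h_\m)/h_\m$ is the all-singletons product $\prod_j(\partial_j h_\m/h_\m)$, each factor linear in $\m$, which well-adaptedness converts into $2^{-|J|}\overline{v}_\m^{\,J}$. Where the two arguments diverge is in how they handle the $\m$-dependence of the spherical point. The paper evaluates the operator identity $p(-v,2\partial)h_\m = \widetilde p(\m)h_\m$ at a \emph{fixed} auxiliary point $v$ with $h_\m(v)\neq 0$; then $p(-v,2\eta(\m,v))$ is manifestly homogeneous of degree $s_p/2$ in $\m$, so its top part is read off instantly, and the well-adapted spherical point is substituted only at the end. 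You instead plug in $v_\m$ from the outset, which forces the rescaling $\m\mapsto t\m$, $v_\m\mapsto\sqrt{t}\,v_\m$ (together with the check you rightly perform, that a well-adapted spherical point of type $\alpha_\m$ rescales to one of type $\alpha_{t\m}$) in order to isolate the $t^{s_p/2}$-coefficient. Both routes land in the same place; yours trades the paper's shortcut for a fully explicit Fa\`a di Bruno control of the lower-order terms --- precisely the ``$(2\partial)^{\bf a}h_\m = (2\eta)^{\bf a}h_\m + LOT(\m)$'' step that the paper asserts without expansion --- at the cost of the extra scaling bookkeeping.
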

\begin{proof} Consider the following:
$$2\partial_i h_\m=2\partial_i( h_1^{m_1}\ldots h_r^{m_r})=2\left(m_1\frac{\partial_ih_1}{h_1}+\ldots+m_r\frac{\partial_i h_r}{h_r}\right)h_\m .$$
We define a vector $\eta(\m,v)$ with entries
$$\eta_i(\m,v)=m_1\frac{\partial_ih_1}{h_1}+\ldots+m_r\frac{\partial_i h_r}{h_r}=\frac{\partial_ih_\m}{h_\m}.$$
For any partial derivative $\partial^{\bf a}=\prod (\partial_i)^{a_i}$, up to lower-order terms in $\m$, we have 
$$(2\partial)^{\bf a}h_\m=(2\eta)^{\bf a}h_\m + LOT(\m).$$ 
Thus $$(2\partial)^{\bf a}h_\m(v_{\m})=(\overline{v}_{\m})^{\bf a}h_\m(v_{\m}).$$
Let $p(v,\overline v)$ be a $K$-invariant polynomial. 
$$p(-v,2\partial)h_\m=p(-v_{\m},2\eta(\m,v_{\m}))h_\m+LOT(\m).$$
Then $p(-v,2\partial)$ acts on $h_\m$ by a scalar, and   the highest order term for the eigenvalue is
$$\tp \widetilde{p}(\m) = p(-v_{\m},2\eta(\m,v)),$$
independent of the choice of $v$. If we use a well-adapted spherical point $v_{\m}$, then $2\eta(\m,v_{\m})=\overline v_{\m},$ so we get 
$$\tp \widetilde{p}(\m) = p(-v_{\m},\overline{v}_{\m})=(-1)^{s_p/2}p(v_{\m},\overline{v}_{\m}).$$ 
\end{proof}
Using equation (\ref{lambdatoone}), we immediately obtain the following corollary. 
\begin{corollary}
\label{top2}
For a type I spherical function $\phi_{\lambda, \m} \in \Delta(K,H_a)$, a spherical point $v_\m \in V$ of $\tau$, and a homogeneous $K$-invariant polynomial $p$ on $V$, we have $$\tp \widehat{D}_p(\phi_{\lambda,\m}) = (-|\lambda|)^{s_p/2}p(v_{\m}).$$
\end{corollary}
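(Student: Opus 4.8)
The plan is to deduce this directly from Lemma \ref{top} together with the scaling relation \eqref{lambdatoone}, after a short remark that lets us replace the given spherical point by a well-adapted one.

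First I would observe that, since $p$ is $K$-invariant, the value $p(v_\m,\overline v_\m)$ depends only on the $K$-orbit of the spherical point $v_\m$. As noted in Section \ref{Heisenberg Orbit Model}, any two spherical points of type $\alpha_\m$ lie in a common $K$-orbit, and every multiplicity-free action admits a well-adapted spherical point of each type; hence there is no loss of generality in assuming $v_\m$ is well-adapted, and Lemma \ref{top} applies to it. Note also that $s_p$ is even (this is built into the construction of $D_p$ via $\rho_\lambda(D_p)=(-2\lambda)^{s_p/2}p(v,\partial)$), so $s_p/2\in\Z$ and the signs below make sense.

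Next I would apply \eqref{lambdatoone}, which gives $\widehat D_p(\phi_{\lambda,\m})=|\lambda|^{s_p/2}\widehat D_p(\phi_{1,\m})=|\lambda|^{s_p/2}\widetilde p(\m)$. Because $|\lambda|^{s_p/2}$ is a scalar independent of $\m$, it pulls out of the highest-order-homogeneous-part operation $\tp$, so $\tp\widehat D_p(\phi_{\lambda,\m})=|\lambda|^{s_p/2}\,\tp\widetilde p(\m)$. By Lemma \ref{top} the factor $\tp\widetilde p(\m)$ equals $(-1)^{s_p/2}p(v_\m,\overline v_\m)$, and combining the two signs yields $(-1)^{s_p/2}|\lambda|^{s_p/2}p(v_\m)=(-|\lambda|)^{s_p/2}p(v_\m)$, which is the assertion.

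There is essentially no obstacle here: the content is entirely contained in Lemma \ref{top} and \eqref{lambdatoone}. The only point that deserves an explicit word is the reduction from an arbitrary spherical point to a well-adapted one, which is handled by the $K$-invariance of $p$ as above; everything else is the trivial observation that a scalar commutes with extraction of the top-degree term in $\m$.
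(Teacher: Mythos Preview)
Your proof is correct and follows essentially the same approach as the paper, which simply states that the corollary follows immediately from equation \eqref{lambdatoone}. Your explicit remark reducing an arbitrary spherical point to a well-adapted one via the $K$-invariance of $p$ is a welcome clarification, since Lemma \ref{top} is stated only for well-adapted spherical points while the corollary is stated for any spherical point.
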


For type II spherical functions, the eigenvalues are obtained by   evaluation on spherical points, as can be seen in the following lemma.

\begin{lemma}\label{type2} Let $\phi_b(v) := \int_K e^{i\langle b,k\cdot v \rangle} dk$ be a spherical function associated to a type II representation, let $p$ be a $K$-invariant polynomial on $V$, and let $D_p$ be the corresponding differential operator.  Then 
$$
\widehat{D}_p (\phi_b) = p(ib).
$$
\end{lemma}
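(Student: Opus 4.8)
The plan is to compute the eigenvalue $\widehat{D}_p(\phi_b)$ directly from the definition of the differential operator $D_p$ acting on the character (matrix coefficient) associated to the type II representation $\chi_b$. Recall that $D_p$ is built so that, on a type I representation realized on Fock space, $\rho_\lambda(D_p) = (-2\lambda)^{s_p/2} p(v,\partial)$; but the construction of $D_p$ as a left-$H_V$-invariant, $K$-invariant differential operator on $H_V$ does not itself reference the representation, so we can apply it to the function $(v,t) \mapsto e^{i\langle b, v\rangle}$ (the type II character, which is trivial on the center) and read off the scalar. First I would recall the precise formula for $D_p$: since $p(v,\overline{v})$ has even degree $s_p$, one writes $D_p$ (up to lower-order terms, which by the remark following Lemma~\ref{top} only affect lower order data but here we want the exact eigenvalue) in terms of the left-invariant vector fields $\widetilde{v}_i$, $\widetilde{\overline{v}}_i$, $\partial_t$ on $H_V$, with the center-variable normalization absorbing the factor of $\lambda$. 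The point is that on a function that is independent of $t$, the left-invariant vector fields reduce to honest partial derivatives $\partial/\partial v_i$ and multiplication operators, so $D_p$ acts on $e^{i\langle b,v\rangle}$ essentially as $p$ evaluated at the appropriate symbol.

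Concretely, I would argue as follows. The function $\phi_b$ is the $K$-average of the character $\psi_b(v,t) = e^{i\langle b,v\rangle}$; since $D_p$ is $K$-invariant and the eigenvalue is $K$-invariant, it suffices to compute $D_p \psi_b = c\, \psi_b$ for a single $b$ and note $c$ is $K$-invariant in $b$, giving $\widehat{D}_p(\phi_b) = c$. Now I evaluate $D_p \psi_b$ at the identity. Under the identification that produces \eqref{evalue}, applying $D_p$ to $\psi_b$ amounts to substituting, in the polynomial $p(v,\overline{v})$, the operators coming from the holomorphic/antiholomorphic left-invariant derivatives; acting on $e^{i\langle b,v\rangle}$ these derivatives produce multiplication by the components of $ib$ (holomorphic part) and, correspondingly, the antiholomorphic derivative $\partial_i$ also produces a factor involving $ib$ after using that the function is holomorphic in a suitable sense — more carefully, since the center acts trivially the bracket terms drop out and one is left with $p(ib, \overline{ib})$ in the real-polynomial sense, which is exactly $p(ib)$ in the notation where $p(v)$ abbreviates $p(v,\overline{v})$. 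So $c = p(ib)$.

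The main obstacle I anticipate is bookkeeping the normalization constants and the role of the center: in the type I case the operator $D_p$ carries a factor $(-2\lambda)^{s_p/2}$ coming from how the central direction interacts with the quantization, and I need to check that on type II functions (where the center acts trivially, i.e.\ ``$\lambda = 0$'' in a degenerate sense) this factor does \emph{not} appear and instead one gets the clean evaluation $p(ib)$ with no extra scalar. This requires looking at the explicit form of $D_p$ as a polynomial in the left-invariant vector fields and checking that the terms responsible for the $\lambda$-scaling are precisely the ones involving $\partial_t$, which annihilate $\psi_b$. A secondary, more routine point is confirming the sign/conjugation conventions so that the holomorphic derivatives acting on $e^{i\langle b,v\rangle}$ yield $ib$ rather than $i\overline{b}$ or $-ib$; this is just matching the Hermitian-inner-product convention fixed earlier, but it must be done carefully to land exactly on $p(ib)$. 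Once these normalization and convention issues are settled, the computation is immediate.
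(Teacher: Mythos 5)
Your proposal is correct and takes essentially the same approach as the paper: compute the action of $D_p$ on the $t$-independent exponential $\psi_b$ via left-invariant vector fields (which, since the central direction is killed, reduce to multiplication by components of $ib$), then use $K$-invariance of $p$ to pass the scalar through the $K$-average defining $\phi_b$. One remark: the paper sidesteps the normalization and holomorphic/antiholomorphic bookkeeping you flag as an obstacle by working with a real orthonormal basis $\{e_j\}$ of $V$ and the associated real left-invariant vector fields $E_j$ (so that $E_j \psi_b = i b_j \psi_b$ immediately), rather than with the complexified derivatives that appear in the Fock-space picture; the $(-2\lambda)^{s_p/2}$ factor in \eqref{evalue} arises only after applying $\rho_\lambda$ and never enters the direct computation on $\psi_b$.
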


\begin{proof} Let $\{e_j\}_{1 \leq j \leq 2a}$ be a (real) basis for $V$ which is orthonormal with respect to the real inner product $\langle \ ,\ \rangle$. Then the corresponding vector fields $E_j$ on $H_a$ act as follows: 

\begin{align*}
E_j\cdot e^{i\langle b,v \rangle} &= \left. \frac{d}{dt}\right|_{t = 0}e^{i \langle b,v + te_j \rangle}\\
    &= \left. \frac{d}{dt}\right|_{t = 0}e^{i \langle b,te_j \rangle}e^{i \langle b,v \rangle}\\
		&= i b_j e^{i \langle b,v \rangle}.
\end{align*}

For a $K$-invariant poynomial $p(v_1,\ldots,v_{2a})$ on $V$, we have
\begin{align*}
D_p \phi_b(v) &= p(E_1,\ldots,E_{2a})\int_K e^{i \langle b ,kv \rangle} dk\\
            &= \int_K p(E_1,\ldots,E_{2a}) e^{i \langle k^{-1}b,v \rangle} dk\\
&=\int_K p(ik^{-1}b) e^{i \langle k^{-1}b,v \rangle} dk\\
						&= \int_K p(ib) e^{i \langle b,kv \rangle} dk = p(ib) \phi_b(v).
\end{align*}

\end{proof}

Finally, using Lemma \ref{top}, Corollary \ref{top2}, and Lemma \ref{type2}, one can prove that the map $\Psi$ is a homeomorphism, and hence prove Theorem \ref{MainThm}, for the Heisenberg Gelfand pair $(K,H_a)$. This was shown in \cite{GEOM} and we refer the reader to the proof of Theorem 1.2 in that paper.

\begin{theorem}\cite{GEOM}
The map $\Psi: \Delta(K,H_a) \to \mathcal{A}(K,H_a)$ given by $$\Psi(\phi_{\lambda,\m}) = K \cdot (\sqrt{\lambda}v_{\m}, \lambda)$$ in the type I case, and $$\Psi(\chi_b) = K\cdot (b,0)$$ in the type II case, is a homeomorphism.
\end{theorem}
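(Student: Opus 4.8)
\emph{Proof proposal.} The map $\Psi$ is already a bijection by Corollary~\ref{Gelfand Space Orbit Bijection}, so the task is to prove that $\Psi$ and $\Psi^{-1}$ are continuous. Both spaces are metrizable — $\Delta(K,H_a)$ because the eigenvalue model $\Phi$ of Theorem~\ref{EigenModel} embeds it in $\R^{r+1}$, and $\mf{n}^*/K$ because $K$ is compact — so it is enough to argue with sequences. I would first reduce the target to a subset of Euclidean space. Since $\mf{n}^* = V \oplus \R$ with $K$ acting on $V$ and trivially on $\R$, and since $\{p_0,\dots,p_r\}$ generates the algebra of $K$-invariant polynomials on $\mf{n}^*$, classical invariant theory for the compact group $K$ shows that $\mathbf{p} := (p_0,\dots,p_r)$ separates $K$-orbits and is a homeomorphism of $\mf{n}^*/K$ onto a subset of $\R^{r+1}$. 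Thus it suffices to prove that $\mathbf{p}\circ\Psi$ is a homeomorphism onto its image, and the entire argument is a comparison of $\mathbf{p}\circ\Psi$ with $\Phi$ by means of Lemma~\ref{top}, Corollary~\ref{top2}, and Lemma~\ref{type2}.

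Writing both maps out on the Mackey parameters (\ref{HGelfandSpace}): for a type I function, $\Phi(\phi_{\lambda,\m}) = \bigl(\lambda,\ |\lambda|^{|\alpha_1|}\widetilde p_1(\m),\ \dots,\ |\lambda|^{|\alpha_r|}\widetilde p_r(\m)\bigr)$ by (\ref{evalue}), (\ref{lambdatoone}) and Example~\ref{D0}, while $\mathbf{p}(\Psi(\phi_{\lambda,\m}))$ has central coordinate a fixed multiple of $\lambda$ and remaining coordinates $|\lambda|^{|\alpha_i|}p_i(v_\m)$ (using $\UU(1)$-invariance and homogeneity of $p_i$ to get $p_i(\sqrt{\lambda}\,v_\m) = |\lambda|^{|\alpha_i|}p_i(v_\m)$); moreover, by Lemma~\ref{top} and Corollary~\ref{top2}, the polynomial $\widetilde p_i(\m)$ equals $(-1)^{|\alpha_i|}p_i(v_\m)$ plus a polynomial in $\m$ of degree strictly below $|\alpha_i|$. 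For a type II function, Lemma~\ref{type2} gives $\Phi(\chi_b) = \bigl(0, p_1(ib), \dots, p_r(ib)\bigr)$ with $p_i(ib) = (-1)^{|\alpha_i|}p_i(b)$, while $\mathbf{p}(\Psi(\chi_b)) = \bigl(0, p_1(b), \dots, p_r(b)\bigr)$. Consequently, up to a fixed invertible linear change of coordinates on $\R^{r+1}$ (absorbing the signs $(-1)^{|\alpha_i|}$ and the normalization of the central coordinate), $\Phi$ \emph{coincides} with $\mathbf{p}\circ\Psi$ on the type II locus and \emph{coincides to leading order in} $\m$ with it on the type I locus.

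Continuity of $\Psi$ and of $\Psi^{-1}$ now reduces to a case analysis on a convergent sequence, which — passing to a subsequence, as is legitimate for a sequential argument — may be taken entirely type I or entirely type II. A type II sequence $\chi_{b_n}$ is immediate, as $\Phi(\chi_{b_n})$ and $\mathbf{p}(\Psi(\chi_{b_n}))$ differ by a fixed linear isomorphism. For a type I sequence $\phi_{\lambda_n,\m_n}$ with $\lambda_n\to\lambda$, the coordinate $\widehat D_1(\phi_{\lambda_n,\m_n}) = -2|\lambda_n|\,|\m_n|$ (Example~\ref{D0}) converges whenever either $\Phi$ or $\mathbf{p}\circ\Psi$ converges along the sequence, so $|\lambda_n|\,|\m_n|$ is bounded; if $\lambda\neq 0$ this forces $|\m_n|$ bounded, hence $\m_n\in(\Z_{\geq 0})^r$ eventually constant, and on $\phi_{\lambda_n,\m}$ both $\Phi$ and $\mathbf{p}\circ\Psi$ are manifestly continuous in $\lambda_n$. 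The remaining, degenerating, case $\lambda_n\to 0$ is the crux: a limit in $\Delta(K,H_a)$, if it exists, is a type II function $\chi_b$ (since $\widehat D_0\to 0$), a limit in $\mf{n}^*/K$ lies in the slice $V\oplus\{0\}$, and one must show that the existence of one of these limits forces the existence and agreement of the other. Here the subleading terms are killed using the bound just obtained: since $\widetilde p_i(\m_n) - (-1)^{|\alpha_i|}p_i(v_{\m_n})$ is a polynomial in $\m_n$ of degree at most $|\alpha_i|-1$,
\[
|\lambda_n|^{|\alpha_i|}\,\bigl|\widetilde p_i(\m_n) - (-1)^{|\alpha_i|}p_i(v_{\m_n})\bigr| \;\le\; C\bigl(|\lambda_n|^{|\alpha_i|} + |\lambda_n|\,(|\lambda_n|\,|\m_n|)^{|\alpha_i|-1}\bigr) \;\longrightarrow\; 0,
\]
so $|\lambda_n|^{|\alpha_i|}\widetilde p_i(\m_n)$ and $(-1)^{|\alpha_i|}|\lambda_n|^{|\alpha_i|}p_i(v_{\m_n})$ have the same limiting behavior; combined with $\widehat D_i(\chi_b) = (-1)^{|\alpha_i|}p_i(b)$ this gives $\Phi(\phi_{\lambda_n,\m_n})\to\Phi(\chi_b)$ if and only if $\mathbf{p}(\Psi(\phi_{\lambda_n,\m_n}))\to\mathbf{p}(\Psi(\chi_b))$, which is exactly what is needed; injectivity of $\Psi$ then matches up the two limits.

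The real difficulty, as the above makes plain, sits entirely in the regime $\lambda_n\to 0$ with $|\m_n|\to\infty$: one needs the discrepancy $\widetilde p_i - (-1)^{|\alpha_i|}p_i(v_{(\cdot)})$ to be of uniformly strictly lower order in $\m$ (this is the content of Lemma~\ref{top}), and one needs the single eigenvalue $\widehat D_1$ to control the growth of $|\m_n|$ against $|\lambda_n|$; both facts rest ultimately on the explicit identities $\widehat D_1(\phi_{1,\m}) = -2|\m|$ and $p_1(v) = |v|^2$. Away from that regime the argument is formal: boundedness of the parameters reduces the question to the continuity of finitely many explicit polynomial functions of $\lambda$, and the type II locus is dealt with outright.
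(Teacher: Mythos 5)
Your proposal is correct and follows essentially the same approach that the paper sketches for the Heisenberg case (via Lemma~\ref{top}, Corollary~\ref{top2}, Lemma~\ref{type2}, and the eigenvalue model) and carries out in detail for the general Theorem~\ref{MainThm} in Section~\ref{The Orbit Model}: reduce to sequential convergence of eigenvalues, handle type II and stable type I sequences directly, and control the degenerating case $\lambda_n\to 0$ by using the $\widehat D_1$ formula to bound $\lambda_n|\m_n|$ and then showing the subleading terms of $\widetilde p_i$ vanish in the limit. The only cosmetic difference is that you package the invariant-theoretic orbit separation as a homeomorphism $\mathbf p:\mf{n}^*/K\to\R^{r+1}$, whereas the paper argues directly with convergence of invariant polynomial values on orbits, but these are equivalent.
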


\section{Representation Theory of $K\ltimes N$}
\label{Representation Theory of KN}

In this section we recall the representation theory of the group $G = K\ltimes N$, where $(K,N)$ is a nilpotent Gelfand pair, following the treatment in \cite{BRfree}. Then we give a more detailed description of these results for the specific class of nilpotent Gelfand pairs we are interested in - those satisfying Definitions \ref{Spherical Central Orbits} and \ref{Non-Degenerate on V}. We conclude the section by describing the eigenvalue model for the Gelfand space of such nilpotent Gelfand pairs.

As a first step in this process, we review the representation theory of the nilpotent group $N$.  Representations of simply connnected, real nilpotent Lie groups are classified by Kirollov's ``orbit method'' \cite{Kir}, which proceeds as follows.  Given an element $\ell \in \mf{n}^*$, one selects a subalgebra $\mf{m} \subseteq \mf{n}$ which is maximal in the sense that $\ell([\mf{m},\mf{m}])=0$.  One then defines a character $\chi_\ell$ of $M = \exp \mf{m}$ by 
$
\chi_\ell(\exp X) = e^{i\ell(X)}
$
and constructs the representation $\pi_\ell := \ind_{M}^N \chi_\ell$ of $N$.  From Kirillov, we know that each irreducible, unitary representation of $N$ is of the form $\pi_\ell$ for some $\ell$, and $\pi_\ell \sim \pi_{\ell'}$ if and only if $\ell$ and $\ell'$ are in the same coadjoint orbit in $\mf{n}^*$.  That is, the association $N \cdot \ell \mapsto \pi_\ell$ yields a bijection between coadjoint orbits in $\mf{n}^*$ and irreducible unitary representations of $N$.

In our setting, $N$ is a two-step nilpotent Lie group. This extra structure allows us to make a canonical choice of an ``aligned point" in each coadjoint orbit. We describe this process now. Recall that the Lie algebra of $N$ is $\mf{n}=\mc{V}\oplus \mf{z}$, where $\mf{z}$ and $\mc{V}$ are orthogonal with respect to the inner product $\langle \cdot, \cdot \rangle$ (see Section \ref{Preliminaries and Notation}). For a coadjoint orbit $\mc{O}\subset \mf{n}^*$, we choose $\ell \in \mc{O}$ so that $\mc{O}=Ad^*(N)\ell$, then we define a bilinear form on $\mf{n}$ by
\[
B_\mc{O}(X,Y)=\ell([X,Y]).
\]
Let $\mf{a}_\mc{O}=\{v \in \mc{V} : \ell([v,\mf{n}])=0\}$. Since $N$ is two-step nilpotent, $B_\mc{O}$ only depends on $\ell|_\mf z$, and hence both $B_\mc{O}$ and $\mf{a}_\mc{O}$ do not depend on our choice of $\ell \in \mc{O}$. For each coadjoint orbit $\mc{O}$, this process gives us a decomposition 
\[
\mf{n}=\mf{a}_\mc{O}\oplus \mf{w}_\mc{O} \oplus \mf{z},
\]
where $\mf{w}_\mc{O}=\mf{a}_\mc{O}^\perp \cap \mc{V}$, and $B_\mc{O}$ is non-degenerate on $\mf{w}_\mc{O}$. We can define a map $\mf{w}_\mc{O} \rightarrow \mc{O}$ by 
\begin{equation}
\label{identification of orbit with v}
X \mapsto Ad^*(X)\ell = \ell-\ell\circ[X, -].
\end{equation}
Since $N$ is two-step nilpotent, this map is a homeomorphism \cite{BRfree}, and thus gives us an identification of $\mf{w}_\mc{O}$ with $\mc{O}$. Note that this identification \emph{does depend on the choice of $\ell$}. However, in \cite{BRfree} it is shown that there is a canonical choice of $\ell$ in the following sense. 
\begin{definition}
\label{aligned point}
A point $\ell \in \mc{O}$ is called an \textbf{aligned point} if $\ell|_{\mf{w}_\mc{O}}=0$. 
\end{definition}

This gives us a canonical identification $\mf{w}_\mc{O} \simeq \mc{O}$. Furthermore, the action of $K$ on $\mf{n}^*$ sends aligned points to aligned points, which implies that the stabilizer $K_\mc{O}=\{k \in K : k \cdot \mc{O}=\mc{O}\}$ of a coadjoint orbit coincides with the stabilizer $K_\ell = \{ k \in K : k \cdot \ell=\ell\}$ of its aligned point. (See Section $3.2$ of \cite{BRfree} for a full discussion.)

Next we recall the process for describing $\widehat{G}$ in terms of representations of $N$ and subgroups of $K$. This is the \emph{Mackey machine}. There is a natural action of $K$ on $\widehat{N}$ by 
\[
k \cdot \pi = \pi \circ k^{-1},
\]
where $k \in K$ and $\pi \in \widehat{N}$. Let $\pi$ be an irreducible unitary representation of $N$ corresponding to a coadjoint orbit $\mc{O} \subset \mf{n}^*$ as described above. Denote the stabilizer of $\pi$ under the $K$-action by
\[
K_{\pi} = \{k \in K\ |\ k \cdot \pi \simeq \pi \}.
\]
Here $\simeq$ denotes unitary equivalence. Note that by the discussion above, $K_\pi = K_\mc{O}$. By Lemma 2.3 of \cite{BJR99}, there is a (non-projective) unitary representation $W_{\pi}$ of $K_{\pi}$ given by
\[
k\cdot \pi(x) = W_{\pi}(k)^{-1} \pi(x) W_{\pi}(k).
\]
Mackey theorey establishes that we can use such representations $W_\pi$ to build all irreducible unitary representations of $G$.
\begin{theorem}\label{KNreps} \cite{BRfree} 
Let $(K,N)$ be any nilpotent Gelfand pair.  Given any irreducible, unitary representation $\alpha$ of $K_{\pi}$, the representation
\[ 
\rho_{\pi, \alpha} := \ind_{K_{\pi} \ltimes N}^{K\ltimes N} \left( (k,x) \mapsto \alpha(k) \otimes \pi(x) W_{\pi}(k) \right)
\]
is an irreducible representation of $G$.  The representation $\rho_{\pi, \alpha}$ is completely determined by the parameters $\pi \in \widehat{N}$ and $\alpha \in \widehat{K}_\pi$.  All irreducible, unitary representations of $G$ are of this form, and $\rho_{\pi, \alpha} \cong  \rho_{\pi',\alpha'}$ if and only if the pairs $(\pi,\alpha)$ and $(\pi',\alpha')$ are  related by the $K$-action.
\end{theorem}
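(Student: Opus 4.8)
The plan is to derive Theorem~\ref{KNreps} from Mackey's ``little group'' method for the semidirect product $G = K\ltimes N$. Since $N$ is a closed normal subgroup with $G/N\cong K$ compact, the action of $G$ on $\widehat{N}$ factors through $K$, and because $K$ is compact every orbit $K\cdot\pi$ is compact, hence closed; thus the action is smooth (regular) and Mackey's normal-subgroup analysis applies. It asserts that every $\rho\in\widehat{G}$ is equivalent to $\ind_{G_\pi}^G\sigma$ for some $\pi\in\widehat{N}$ and some $\sigma\in\widehat{G_\pi}$ whose restriction to $N$ is a multiple of $\pi$, where $G_\pi:=K_\pi\ltimes N$ is the stabilizer of $\pi$; moreover $\ind_{G_\pi}^G\sigma\cong\ind_{G_{\pi'}}^G\sigma'$ if and only if some $g\in G$ carries the pair $(\pi,\sigma)$ to $(\pi',\sigma')$. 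The theorem then reduces to (i) classifying the irreducible $\sigma\in\widehat{G_\pi}$ with $\sigma|_N$ a multiple of $\pi$, and (ii) rewriting $G$-conjugacy of such pairs as the stated $K$-action on $(\pi,\alpha)$.

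For step (i), I would use the genuine unitary representation $W_\pi$ of $K_\pi$ provided by \cite[Lemma 2.3]{BJR99}, which satisfies $k\cdot\pi(x)=W_\pi(k)^{-1}\pi(x)W_\pi(k)$. A direct computation with the multiplication law $(k,x)(k',x')=(kk',x(k\cdot x'))$, using that $W_\pi$ is a homomorphism, shows that $(k,x)\mapsto\pi(x)W_\pi(k)$ is a homomorphism $G_\pi\to\UU(\mc{H}_\pi)$; tensoring with $\alpha\in\widehat{K_\pi}$ (pulled back along $G_\pi\to K_\pi$) yields the representations $\alpha\otimes\pi W_\pi$. Conversely, given irreducible $\sigma$ with $\sigma|_N$ a multiple of $\pi$, write $\mc{H}_\sigma=\mc{H}_\pi\otimes\mc{M}$ so that $\sigma|_N=\pi\otimes I$. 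The semidirect-product relation together with the defining property of $W_\pi$ forces $\sigma(k,e)$ and $W_\pi(k)\otimes I$ to differ by an element of the commutant of $\pi(N)\otimes I$, which by irreducibility of $\pi$ is $I\otimes\UU(\mc{M})$; hence $\sigma(k,e)=W_\pi(k)\otimes\alpha(k)$ for a unitary $\alpha(k)$ on $\mc{M}$. Because $W_\pi$ is an honest representation, $\alpha$ is an honest representation of $K_\pi$, so $\sigma=\alpha\otimes\pi W_\pi$, and $\sigma$ is irreducible precisely when $\alpha$ is. This identifies the relevant part of $\widehat{G_\pi}$ with $\widehat{K_\pi}$.

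Substituting back, $\rho_{\pi,\alpha}=\ind_{K_\pi\ltimes N}^{K\ltimes N}(\alpha\otimes\pi W_\pi)$ is irreducible, exhausts $\widehat{G}$, and $\rho_{\pi,\alpha}\cong\rho_{\pi',\alpha'}$ exactly when $(\pi,\sigma)$ and $(\pi',\sigma')$ are $G$-conjugate; since $N$ acts trivially on $\widehat{N}$ and conjugation by $N$ does not alter the $\widehat{K_\pi}$-label of $\sigma$, this is precisely the condition that $(\pi,\alpha)$ and $(\pi',\alpha')$ lie in the same orbit of the $K$-action on such pairs (with $K$ acting on the second coordinate through the isomorphisms $K_\pi\to K_{k\cdot\pi}$). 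The step I expect to demand the most care is (i): checking that $(k,x)\mapsto\pi(x)W_\pi(k)$ is a homomorphism and, more subtly, that the $2$-cocycle measuring the failure of $\alpha$ to be a genuine representation is trivial — this is exactly why the genuine $W_\pi$ of \cite[Lemma 2.3]{BJR99} is needed. Once that input is in place, the remainder is the standard Mackey bookkeeping, and the regularity hypothesis required to run Mackey's machine is immediate from compactness of $K$.
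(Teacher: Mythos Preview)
The paper does not actually prove Theorem~\ref{KNreps}; it is stated with attribution to \cite{BRfree} and invoked as background (the sentence preceding it reads ``Mackey theory establishes that we can use such representations $W_\pi$ to build all irreducible unitary representations of $G$''). Your proposal is precisely the Mackey little-group argument that underlies the cited result: regularity of the $K$-action on $\widehat{N}$ from compactness, classification of $\widehat{G_\pi}$ over $\pi$ via the genuine intertwiner $W_\pi$ of \cite[Lemma~2.3]{BJR99}, and the standard equivalence bookkeeping. This is the expected route, and your identification of the key nontrivial input --- that $W_\pi$ is an honest (non-projective) representation, so the Mackey obstruction cocycle vanishes and $\alpha$ ranges over $\widehat{K_\pi}$ rather than a projective dual --- is exactly the point emphasized in the paper's setup.
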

We say that $\rho = \rho_{\pi, \alpha}$ has \emph{Mackey parameters} $(\pi, \alpha)$.  For a coadjoint orbit $\mc{O} \subset \mf{n}^*$ with aligned point $\ell \in \mc{O}$, the corresponding representation $\pi \in \widehat{N}$ factors through 
\[
N_\mc{O}=exp(\mf{n}/\ker(\ell|_{\mf{z}})).
\]
The group $N_\mc{O}$ is the product of a Heisenberg group $H$ and the (possibly trivial) abelian group $\mf{a}_\mc{O}$. The inner product $\langle \cdot, \cdot \rangle$ can be used to construct an explicit isomorphism $\varphi$ from $H$ to the standard Heisenberg group $H_a:=V \oplus \R$, where $V$ is a unitary $K_\pi$ space (see Section $5.1$ of \cite{BRfree}). This construction allows us to realize $\pi$ as the standard representation of $H_V$ in the Fock space $\mc{F}_V$ on $V$, and thus realize $W_\pi$ as the restriction to $K_\pi$ of the standard representation of $U(V)$ on $\mc{F}_V$. 

Now we specialize to the case of nilpotent Gelfand pairs satisfying Definitions \ref{Spherical Central Orbits} and  \ref{Non-Degenerate on V}. 

As in Section \ref{Introduction}, we fix $A \in \mf{z}$ to be a unit base point. 
For any $\ell=(b,B)\in \mf n^*$ with $B\ne 0,$ we have $B=\lambda A$ with $\lambda>0.$ 
  The form $(v,w) \mapsto \langle [v,w],A\rangle$ is non-degenerate on $\mc{V},$ and hence the orbit through $\ell $ is $\O=\mc V \oplus \lambda A$ with aligned point $(0,\lambda A).$

For $\ell=(b,0)$, the coadjoint orbit through $\ell$  is a single point.  We conclude that we have two types of coadjoint orbits: 
\begin{itemize}
\item \textbf{Type I Orbits}: When the parameter $\lambda >0$, we have an aligned point of the form $\ell=(0, \lambda A)$. We call the corresponding coadjoint orbits \emph{type I orbits} and the corresponding representations $\pi_\lambda \in \widehat{N}$ \emph{type I representations}. Since these orbits depend only on the parameter $\lambda \in \R^+$, we denote them $\mc{O}_{\lambda A}$. 
\item \textbf{Type II Orbits}: When $\lambda=0$, the corresponding coadjoint orbits contain only the aligned point $\ell=(b,0)$, where $b \in \mc{V}$. We call such coadjoint orbits \emph{type II orbits} and the corresponding representations $\chi_b \in \widehat{N}$ a \emph{type II representations}. Since these orbits depend only on the parameter $b \in \mc{V}$, we denote them $\mc{O}_b$.
\end{itemize}

Consider a type I coadjoint orbit $\mc{O}_{\lambda A}$ with aligned point $\ell = (0, \lambda A)$ and corresponding type I representation $\pi_\lambda \in \widehat{N}$. The coadjoint orbit is of the form 
\[
\mc{O}_{\lambda A} = \mc{V} \oplus \lambda A.
\]
The representation $\pi_\lambda$ has codimension 1 kernel in $\mf{z}$, and factors through 
\[
N_{\mc{O}_{\lambda A}}=exp(\mf{n}/\ker(\ell|_{\mf{z}})).
\]
where $N_{\mc{O}_{\lambda A}}=H_A:=\mc{V} \oplus \R A$ is a Heisenberg group. On $H_A$, our representation is a type I representation $\rho_\lambda$ of the standard Heisenberg group, which can be realized on Fock space (see Section \ref{Heisenberg Spherical Functions and the Eigenvalue Model} for a construction of such representations). We can make this explicit by describing the map $\varphi$ between $H_A$ and the standard Heisenberg group, which we will use to relate our nilpotent Gelfand pairs to the established results in the Heisenberg setting. 

The stabilizer $K_{\pi_\lambda}$ of $\pi_\lambda$ in $K$ is equal to the stabilizer $K_A$ of $A$. Since the form $(v,w) \mapsto \langle [v,w], A \rangle$ is non-degenerate, there is an invertible map 
\begin{equation}
\label{JA}
J_A: \mc{V} \rightarrow \mc{V}
\end{equation}
satisfying $\langle [v,w],A\rangle = \langle J_A v, w \rangle$ (see Section 4.3.1 of \cite{FRY1}). Since $J_A$ is skew-symmetric, we can decompose $\mc{V}=\sum \mc{V}_\mu$, where $J_A$ on $\mc{V}_\mu$ is of the form $\mu J$ where $J=\begin{pmatrix} 0 & I_m \\ -I_m & 0 \end{pmatrix}$ and $I_m$ is the $m \times m$ identity matrix for some $m \in \mathbb{N}$. We define a $K_A$-equivariant group isomorphism 
\[
\varphi:H_A=\mc{V}\oplus \R A \rightarrow H_{\mc{V}},
\]
where $H_\mc{V} = \mc{V} \oplus \R$ is as in Section \ref{Heisenberg Spherical Functions and the Eigenvalue Model}, in the following way.
For $v=\sum v_\mu \in \mc{V}$, let 
\[
\varphi(v)=\sum \frac{1}{\sqrt{\mu}} v_\mu,
\]
and for $(v, tA) \in H_A$, let 
\[
\varphi(v, tA) = (\varphi(v),t).
\]
This gives us a the precise relationship between type I representations $\pi_\lambda \in \widehat{N}$ and type I representations $\rho_\lambda \in \widehat{H_\mc{V}}$:
\[
\pi_\lambda = \rho_\lambda \circ \varphi.
\]
Type II orbits are of the form 
\[
\mc{O}_b=\{(b,0)\}
\]
for $b \in \mc{V}$. They correspond to $1$-dimensional representations 
\[
\psi_b(v,z)=e^{i\langle v, b \rangle}
\]
for $v \in \mc{V}$ and $z \in \mf{z}$. The stabilizer $K_{\chi_b}$ of a type II representation $\chi_b$ is the stabilizer $K_b$ of $b$ in $K$, and the representation $W_{\chi_b}$ is the trivial one-dimensional representation $1_{K_b}$ of $K_b$, so the second Mackey parameter for type II representations is trivial. 

From this discussion, we have the following corollary to Theorem \ref{KNreps}. 
\begin{corollary} Suppose that the nilpotent Gelfand pair $(K,N)$ satisfies Definitions \ref{Spherical Central Orbits} and \ref{Non-Degenerate on V}. Fix $A \in \mf{z}$ with norm 1.  Then each representation $\rho \in \widehat{G}$ is unitarily equivalent to one of the form $\rho_{\pi_\lambda, \alpha}$ for $\lambda \in \R^+$ and $\alpha \in \widehat{K}_A$, or one of the form $\rho_{\chi_b}=\ind_{K_b \ltimes N}^{K \ltimes N}( 1 \otimes \chi_b)$.
\end{corollary}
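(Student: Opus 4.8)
The plan is to deduce this corollary directly from Theorem \ref{KNreps} (the Mackey machine description of $\widehat{G}$) together with the explicit analysis of coadjoint orbits carried out just above for nilpotent Gelfand pairs satisfying Definitions \ref{Spherical Central Orbits} and \ref{Non-Degenerate on V}. By Theorem \ref{KNreps}, every $\rho \in \widehat{G}$ is of the form $\rho_{\pi,\alpha}$ for some $\pi \in \widehat{N}$ and $\alpha \in \widehat{K}_\pi$, uniquely up to the $K$-action on the pair $(\pi,\alpha)$. So the task reduces to enumerating the $K$-orbits of irreducible unitary representations $\pi$ of $N$, identifying the stabilizer $K_\pi$ in each case, and translating the representation-theoretic data into the parameters $\lambda \in \R^+$, $\alpha \in \widehat{K}_A$, or $b \in \mc{V}$.

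First I would invoke Kirillov's orbit method to identify $\widehat{N}$ with the space of coadjoint orbits in $\mf{n}^*$, and then use the orbit classification established in the preceding paragraphs: under Definitions \ref{Spherical Central Orbits} and \ref{Non-Degenerate on V}, every coadjoint orbit is either a type I orbit $\mc{O}_{\lambda A}$ (for $\lambda > 0$, with aligned point $(0,\lambda A)$) or a type II orbit $\mc{O}_b = \{(b,0)\}$ (for $b \in \mc{V}$). Next I would observe that the $K$-action on $\mf{n}^*$ fixes $\mf{z}$ setwise and fixes $A$ up to the $K$-action on $\mf{z}$; since by spherical central orbits the generic $K$-orbit in $\mf{z}$ has codimension one and we have normalized $A$ to be a unit base point, every type I representation is $K$-conjugate to $\pi_\lambda$ for a unique $\lambda > 0$, and its stabilizer is $K_{\pi_\lambda} = K_A$ as recorded above. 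For the type II orbits, the representation $\chi_b$ has stabilizer $K_{\chi_b} = K_b$, and the companion representation $W_{\chi_b} = 1_{K_b}$ is trivial, so the only Mackey parameter is $b$ (up to $K$) and $\alpha$ must be trivial; thus $\rho_{\chi_b,\text{triv}} = \ind_{K_b \ltimes N}^{K \ltimes N}(1 \otimes \chi_b)$. Assembling these two cases and feeding them into Theorem \ref{KNreps} yields exactly the stated normal forms $\rho_{\pi_\lambda,\alpha}$ and $\rho_{\chi_b}$.

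There is essentially no hard step here: the corollary is a bookkeeping consequence of Theorem \ref{KNreps} once the orbit picture is in hand, and the orbit picture has already been worked out above. The only point requiring a little care is checking that the $K$-action on type I orbits is transitive on each ``$\lambda$-level'' so that $\lambda \in \R^+$ is a genuine parameter for $K$-orbits of type I representations — this is precisely where the spherical central orbits hypothesis enters, via the codimension-one generic orbit in $\mf{z}$ together with the normalization of $A$ — and that the Fock-space realization of $\pi_\lambda$ via the isomorphism $\varphi: H_A \to H_{\mc{V}}$ makes $W_{\pi_\lambda}$ the restriction of the standard $U(\mc{V})$-representation to $K_A$, so that $\alpha$ ranges over all of $\widehat{K}_A$. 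Both of these were established in the text preceding the statement, so the proof is a short paragraph combining Theorem \ref{KNreps} with the type I / type II orbit dichotomy.
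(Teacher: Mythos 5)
Your proof follows exactly the paper's implicit argument. The paper gives no explicit proof, merely noting ``From this discussion, we have the following corollary to Theorem \ref{KNreps}'': the preceding discussion establishes the type~I/type~II orbit dichotomy, identifies the stabilizers $K_{\pi_\lambda}=K_A$ and $K_{\chi_b}=K_b$, and observes that $W_{\chi_b}$ is trivial. You spell this out in the same order, so the route is the same and the proof is correct in the same sense that the paper's is.

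One small note for your own understanding: the inference ``$W_{\chi_b}$ is trivial, hence $\alpha$ must be trivial'' is not a logical implication. In Theorem \ref{KNreps}, $\alpha$ ranges over all of $\widehat{K}_\pi$ independently of $W_\pi$, and $\rho_{\chi_b,\alpha}$ with nontrivial $\alpha\in\widehat{K}_b$ is still a genuine irreducible of $G$. The paper's text also asserts that ``the second Mackey parameter for type~II representations is trivial,'' so you are faithfully reproducing the source; but strictly speaking this is only forced once one restricts to $\widehat{G}_K$ (by Frobenius reciprocity, a $K$-fixed vector in $\ind_{K_b\ltimes N}^{K\ltimes N}(\alpha\otimes\chi_b)$ requires $\alpha$ trivial), which is all that matters for the spherical-function correspondence. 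Since this imprecision is in the paper's own statement and is harmless for the intended use, it does not count against your argument, but it is worth recognizing that the triviality of $\alpha$ is a consequence of restricting to $K$-spherical representations, not of $W_{\chi_b}$ being trivial.
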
 

We complete this section with a parameterization of the Gelfand space and a description of the eigenvalue model of the Gelfand space. By the discussion above, type $I$ representations $\pi_\lambda$ of $N$ are parameterized by $\lambda > 0$ and they factor through a unique type I representation $\rho_\lambda$ of $H_\mc{V}$. Therefore, the corresponding type I spherical functions in $\Delta(K,N)$ are parameterized by pairs $(\lambda, \m)$, where $\lambda>0$ and $\m = (m_1, \ldots m_r)$, where $\alpha_\m = m_1 \alpha_1 + \cdots m_r \alpha_r$ is a highest weight of an irreducible subrepresentation of $K_A$ on $\C[\mc{V}]$, as in Section \ref{Heisenberg Spherical Functions and the Eigenvalue Model}. We denote the type I spherical function in $\Delta(K,N)$ corresponding to the parameters $(\lambda, \m)$ by $\psi_{\lambda, \m}$. From a type II representation $\chi_b$, one constructs a spherical function $\psi_b$ by 
\[
\psi_b(v,z):=\int_K e^{i \langle b, k \cdot v \rangle} dk.
\]
Thus, we have the following parameterization of the Gelfand space. 
\[
\Delta(K,N) \leftrightarrow \{(\lambda, \m)|\lambda > 0, \m \in (\Z_{\geq 0})^r\} \cup (\mc{V}/K).
\]

The eigenvalue model mentioned in Section \ref{Introduction} gives us a useful geometric model of $\Delta(K,N)$. 

\begin{theorem}
\cite{FR07}
\label{The Eigenvalue Model}
There is an isomorphism 
\[
\Phi:\Delta(K,N) \rightarrow \mc{E}(K,N)
\]
given by $\Phi(\psi)=(\widehat{D}_0(\psi), \ldots, \widehat{D}_r(\psi))$, where $D_i$ are invariant differential operators obtained from a generating set $\{p_0, \ldots, p_r\}$ of $K$-invariant polynomials on $\mf{n}.$\footnote{We remind the reader that we are identifying $N$ and $\mf{n}$ via the exponential map, so this theorem is equivalent to Theorem \ref{EigenModel} stated in Section \ref{Introduction}.}
\end{theorem}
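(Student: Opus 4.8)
The plan is to reduce the statement to Theorem~\ref{EigenModel}. The only point needing comment is the compatibility of the two setups: here the operators $D_0,\dots,D_r$ are produced from a homogeneous generating set $\{p_0,\dots,p_r\}$ of the $K$-invariant polynomials on $\mf{n}$ by the quantization map, which (as recalled in Section~\ref{Introduction}) induces an algebra isomorphism $\D_K(N)\xrightarrow{\sim}\C[\mf{n}]^K$ on symbols; hence $\{D_0,\dots,D_r\}$ is a free generating set for $\D_K(N)$. With that observed, $\Phi(\psi)=(\widehat{D}_0(\psi),\dots,\widehat{D}_r(\psi))$ is precisely the map of Theorem~\ref{EigenModel} written in the coordinates supplied by the identification $N\simeq\mf{n}$, and $\mc{E}(K,N)$ is by definition its image, so the homeomorphism assertion is immediate.

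If one instead wants a direct proof, I would organize it as follows. \emph{Well-definedness and injectivity.} Each bounded $K$-spherical function is a joint eigenfunction for $\D_K(N)$; choosing symmetric representatives $D_i$ makes the eigenvalues real, so $\Phi$ maps into $\R^{r+1}$. Since $D_0,\dots,D_r$ generate $\D_K(N)$, the tuple $\Phi(\psi)$ determines the full eigenvalue character of $\D_K(N)$, and a bounded spherical function is determined by this character (for instance via the explicit parametrization of $\Delta(K,N)$ by pairs $(\lambda,\m)$ and by $\mc{V}/K$ recalled above, for which one checks directly that distinct parameters yield distinct eigenvalue tuples), so $\Phi$ is injective. \emph{Continuity of $\Phi$.} For suitable $f\in L^1_K(N)$ with $\chi_\psi(f)\neq 0$ one has $\widehat{D}_i(\psi)=\chi_\psi(D_i f)/\chi_\psi(f)$, and such sets cover $\Delta(K,N)$, so each coordinate of $\Phi$ is continuous for the weak-$*$ (equivalently compact-open) topology.

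\emph{Continuity of $\Phi^{-1}$} is the substantive step, and the one I expect to be the main obstacle. The strategy is a normal-families argument: the normalized bounded spherical functions whose eigenvalue tuples lie in a fixed compact set form a uniformly bounded family that is locally equicontinuous together with all its derivatives, by interior estimates for the system $D_i\psi=\widehat{D}_i(\psi)\psi$ (using ellipticity of an appropriate combination of the $D_i$, e.g.\ a sub-Laplacian together with a power of $D_0$). Consequently, if $\Phi(\psi_n)\to x$ in $\mc{E}(K,N)$, every subsequence of $(\psi_n)$ has a further subsequence converging in $C^\infty$ on compacta to a $K$-invariant $\psi_\infty$ with $\psi_\infty(e)=1$; passing to the limit in the eigenvalue equations shows $\psi_\infty$ is a bounded joint eigenfunction with eigenvalue tuple $x$, hence equals $\Phi^{-1}(x)$. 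As the limit is independent of the subsequence, $\psi_n\to\Phi^{-1}(x)$, and since both spaces are metrizable this gives continuity of $\Phi^{-1}$. The genuine work is the equicontinuity/elliptic-regularity input and the verification that $\psi_\infty$ is actually spherical rather than merely a bounded joint eigenfunction of some degenerate type; everything else is bookkeeping around the quantization isomorphism and the comparison of the two topologies.
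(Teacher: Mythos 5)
Your opening paragraph is precisely what the paper does: Theorem~\ref{The Eigenvalue Model} is not proved in this paper at all but is cited from \cite{FR07}, with the footnote observing that it is Theorem~\ref{EigenModel} restated through the identification $N\simeq\mathfrak{n}$ and that $\mathcal{E}(K,N)$ is by definition the image of $\Phi$. The remark that $\{D_0,\dots,D_r\}$ is a generating set for $\mathbb{D}_K(N)$ is exactly the compatibility check needed to invoke Theorem~\ref{EigenModel}, and it is already asserted in Section~\ref{Introduction}. Your alternative ``direct proof'' sketch is supplementary and captures the broad shape of the argument in \cite{Wol06,FR07}, but it is not what the paper supplies; note also that your injectivity step leans on the explicit $(\lambda,\mathbf{m})$ and $\mathcal{V}/K$ parametrization, which in this paper is only set up for the restricted class of pairs satisfying Definitions~\ref{Spherical Central Orbits} and \ref{Non-Degenerate on V}, whereas the cited theorem holds for all Gelfand pairs, so a genuine direct proof would need an argument independent of that parametrization.
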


Without loss of generality, we may choose this generating set $\{p_0, \ldots , p_r\}$ to consist of homogeneous polynomials. (Indeed, because the $K$-action is linear, it will preserve homogeneous terms of each degree, so each homogeneous term is itself an invariant polynomial.) In addition, we choose our enumeration so that the first two polynomials are the invariants $p_0(v,Y)=|Y|^2$ and $p_1(v, Y)=|v|^2$.

\section{Moment Map and Spherical Points}
\label{Moment Map and Spherical Points}
In this section we define moment maps on coadjoint $Ad^*(N)$-orbits for a general nilpotent Gelfand pair $(K,N)$, following \cite{BRfree}. We then specialize to the setting of our class of nilpotent Gelfand pairs and calculate the spherical points that we use to define the map $\Psi$ discussed in Section \ref{Introduction}.
 
\begin{definition} Let $\mathcal{O} \subset \mathfrak{n}^*$ be a coadjoint orbit for $N$, $K_{\mathcal{O}}$ the stabilizer of $\mathcal{O}$ in $K$ and $\mathfrak{k}_\mathcal{O}$ its Lie algebra. The {\bf moment map} $\tau_{\mathcal{O}}: \mathcal{O} \to \mathfrak{k}^*$ is defined via
\[\tau_{\mathcal{O}}(Ad^*(X) \ell_{\mathcal{O}})(Z)=-\frac{1}{2}B_\mathcal{O}(X,Z\cdot X)=-\frac{1}{2}\ell_{\mathcal{O}}[X,Z\cdot X]\]
for $Z \in \mathfrak{k}_\mathcal{O}$, $X \in \mathfrak{n}$. Here $\ell_\mathcal{O}$ is the unique aligned point in $\mathcal{O}$. 
\end{definition}

Now we specialize to nilpotent Gelfand pairs $(K,N)$ satisfying Definitions \ref{Spherical Central Orbits} and \ref{Non-Degenerate on V} and identify the spherical points in $\mc{A}(K,N)$.  We start by analyzing the moment map $\tau_\mc{O}$ of the preceding definition in more detail for the type I orbit $\mc{O}_A$ with aligned point $\ell_A=(0, A)$. Let $\tau_A:=\tau_{\mc{O}_A}$ be the moment map defined above. Then 
\[
\ell_A([v, Z \cdot v])=\langle A, [v, Z \cdot v] \rangle.
\]
For $v,w \in \mc{V}$, 
\[
Ad^*(v) \ell_A(w) = \ell_A(w - [v,w]) = -\langle A,[v,w] \rangle = -\langle J_Av,w \rangle,
\]
since $\mc{V}$ and $\mf{z}$ are orthogonal with respect to $\langle \cdot, \cdot \rangle$. So if we identify $\mathcal{O}_A$ with $\mc{V}$ in the sense of equation (\ref{identification of orbit with v}), we have $Ad^*(v) \ell_A = -J_Av$, and 
\[
\tau(J_Av)(Z)=\frac{1}{2}\langle A, [v, Z \cdot v] \rangle = \tau_A(Ad^*(v) \ell_A)(Z).
\]
Here $\tau:\mc{V} \rightarrow \mathfrak{k}_A^*$ is the moment map $\tau(v)(Z) = - \frac{1}{2}\langle v, Z \cdot v \rangle$ defined in Section \ref{Heisenberg Orbit Model}. Recall that by equation (\ref{heisenberg moment maps}), the relationship between this moment map $\tau$ and the moment map $\tau_\lambda: \mc{O}_\lambda \rightarrow \mf{k}$ on a type I coadjoint orbit $\mc{O}_\lambda\subset \mf{h}^*$ of the Heisenberg group is given by $\frac{1}{\lambda}\tau(v) = \tau_\lambda(v,\lambda)$. 

The moment maps on all other type I orbits $\mc{O}_{\lambda A}$ can be obtained from $\tau_A$ by scaling. Indeed, if $\tau_{\lambda A} :=\tau_{\mc{O}_{\lambda A}}$ is the moment map on a type I orbit $\mc{O}_{\lambda A}$ with aligned point $\ell_{\lambda A} = (0, \lambda A)$, then
\[
\tau_{\lambda A}(Ad^*(X) \ell_{\lambda A})(Z) = 
-\frac{1}{2} \langle \lambda A, [v, Z \cdot v ] \rangle 
= \lambda \tau_A(Ad^*(v) \ell_A)(Z),
\]
for $v \in \mc{V}$ and $Z \in \mf{k}_A^*$. This gives us the following relationship between moment maps:
\begin{equation}
\label{moment maps relationship}
\tau_{\lambda A}(Ad^*(v)\ell_{\lambda A}) 
=\lambda \tau_A(Ad^*(v) \ell_A) 
= \lambda \tau(J_A v) 
= \lambda^2 \tau_\lambda(J_A v, \lambda).
\end{equation}

This relationship allows us to compute type I spherical points of $\tau_{\lambda A}$ using type I spherical points of $\tau_\lambda$, which we established in Section \ref{Heisenberg Orbit Model} are $(\sqrt{\lambda}v_\m, \lambda) \in \mc{O}_\lambda$ for $v_\m \in \mc{V}$ a spherical point of $\tau$ of type $\alpha_\m$. Under our association of $\mf{n}$ and $\mf{n}^*$ via $\langle \cdot , \cdot \rangle$ (see Section \ref{Preliminaries and Notation}), one can compute the coadjoint action on aligned points $(0, \lambda A)$. For $v \in \mc{V}$,
\[
Ad^*(v)(0, \lambda A) = (\lambda J_A v, \lambda A).
\]
Using this action and (\ref{moment maps relationship}), we compute 
\begin{align*}
\tau_{\lambda A}(\sqrt{\lambda} v_\m, \lambda A)(Z) &=
\tau_{\lambda A}(Ad^*(\frac{1}{\sqrt{\lambda}}J_A^{-1}v_\m)\ell_{\lambda A})(Z) \\
&=\lambda \tau(\frac{1}{\sqrt{\lambda}} v_\m)(Z) \\
&= - \frac{\lambda}{2}\left\langle \frac{1}{\sqrt{\lambda}}v_\m, Z \cdot \frac{1}{\sqrt{\lambda}} v_\m \right\rangle \\
&= - \frac{1}{2} \langle v_\m, Z \cdot v_\m \rangle \\
& = \tau(v_\m) \\ 
& = \alpha_\m.
\end{align*}

This proves the following lemma.

\begin{lemma}\label{type1sph} Let $\mc{O}_{\lambda A} = \mc{V} \oplus \lambda A$ be a type I orbit in $\mf{n}^*$.  The type I spherical points contained in $\mc{O}_{\lambda A}$ are $(\sqrt{\lambda}v_{\bf m}, \lambda A)$, where $(\sqrt{\lambda}v_{\bf m}, \lambda)$ is a spherical point in the associated Heisenberg coadjoint orbit $\mc{O}_\lambda \subset \mf{h}^*$.
\end{lemma}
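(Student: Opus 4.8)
The plan is to verify Lemma~\ref{type1sph} by a direct computation, essentially running backwards through the moment-map bookkeeping already assembled in equations~(\ref{identification of orbit with v}), (\ref{heisenberg moment maps}) and (\ref{moment maps relationship}). By Definition~\ref{spherical point}, a point in $\mc{O}_{\lambda A}$ is a type~I spherical point of type $\alpha_\m$ precisely when $\tau_{\lambda A}$ sends it to ${\alpha_\m}_{\mf{k}} = \alpha_\m \in \mf{k}_A^*$. So the statement amounts to checking that $\tau_{\lambda A}(\sqrt{\lambda} v_\m, \lambda A) = \alpha_\m$, where $v_\m \in \mc{V}$ is the fixed spherical point of $\tau$ of type $\alpha_\m$ guaranteed by the Heisenberg discussion in Section~\ref{Heisenberg Orbit Model}.

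First I would recall the coadjoint-orbit parametrization: the aligned point of $\mc{O}_{\lambda A}$ is $\ell_{\lambda A} = (0,\lambda A)$, and equation~(\ref{identification of orbit with v}) together with the formula $Ad^*(v)(0,\lambda A) = (\lambda J_A v, \lambda A)$ identifies $\mc V$ with $\mc{O}_{\lambda A}$. Hence the point $(\sqrt\lambda v_\m, \lambda A)$ corresponds to $X = \tfrac{1}{\sqrt\lambda} J_A^{-1} v_\m \in \mc V$, i.e. $(\sqrt\lambda v_\m, \lambda A) = Ad^*\!\bigl(\tfrac{1}{\sqrt\lambda}J_A^{-1}v_\m\bigr)\ell_{\lambda A}$. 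Next I would plug this into the chain of identities in~(\ref{moment maps relationship}): $\tau_{\lambda A}(Ad^*(u)\ell_{\lambda A}) = \lambda\,\tau_A(Ad^*(u)\ell_A) = \lambda\,\tau(J_A u)$ for $u \in \mc V$. Taking $u = \tfrac{1}{\sqrt\lambda}J_A^{-1}v_\m$ gives $J_A u = \tfrac{1}{\sqrt\lambda} v_\m$, so $\tau_{\lambda A}(\sqrt\lambda v_\m, \lambda A) = \lambda\,\tau\bigl(\tfrac{1}{\sqrt\lambda}v_\m\bigr)$. Finally, the explicit formula $\tau(v)(Z) = -\tfrac12\langle v, Z\cdot v\rangle$ is quadratic in $v$, so $\lambda\,\tau(\tfrac{1}{\sqrt\lambda}v_\m) = \lambda\cdot\tfrac{1}{\lambda}\tau(v_\m) = \tau(v_\m) = \alpha_\m$, which is exactly the spherical-point condition. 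Conversely, since $\tau$ is injective on $K_A$-orbits (the multiplicity-free hypothesis) and the scalings above are $K_A$-equivariant bijections, these are the only type~I spherical points in $\mc{O}_{\lambda A}$, up to the $K_A$-action — and one reads off that they sit over the Heisenberg spherical points $(\sqrt\lambda v_\m, \lambda)$ of $\mc{O}_\lambda$ via the correspondence $\pi_\lambda = \rho_\lambda\circ\varphi$ established in Section~\ref{Representation Theory of KN}.

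There is essentially no obstacle here: the lemma is a packaging statement whose content was already worked out in the display preceding it in the text. The only point requiring mild care is the consistent tracking of the factor $J_A$ and the powers of $\lambda$ — in particular remembering that $\tau$ is \emph{quadratic}, so a rescaling $v \mapsto \tfrac{1}{\sqrt\lambda}v$ contributes a factor $\tfrac1\lambda$ that precisely cancels the explicit $\lambda$ coming from $\ell_{\lambda A} = \lambda\,\ell_A$ in~(\ref{moment maps relationship}). One should also note that the identification of $\mc{O}_{\lambda A}$ with $\mc V$ depends on the choice of aligned point, so the formula $Ad^*(v)(0,\lambda A) = (\lambda J_A v, \lambda A)$ (derived from the inner-product identification of $\mf n$ with $\mf n^*$ and the definition of $J_A$ in~(\ref{JA})) must be invoked to make the match with the Heisenberg picture unambiguous. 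Since the proof in the excerpt in fact just presents this computation and then states ``This proves the following lemma,'' my proposal is simply to present that same computation cleanly, emphasizing the two cancellations above.
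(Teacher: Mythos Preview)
Your proposal is correct and follows exactly the same route as the paper: the paper's proof is precisely the displayed computation immediately preceding the lemma, writing $(\sqrt\lambda v_\m,\lambda A)=Ad^*\bigl(\tfrac{1}{\sqrt\lambda}J_A^{-1}v_\m\bigr)\ell_{\lambda A}$, applying~(\ref{moment maps relationship}), and using the quadratic scaling of $\tau$ to cancel the $\lambda$. Your added converse remark (injectivity of $\tau$ on $K_A$-orbits) is a small bonus the paper leaves implicit.
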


Since type II orbits $\mc{O}_b$ contain a single point $(b,0)$ and the moment map $\tau_{\mc{O}_b}:\mc{O}_b \rightarrow \mf{k}^*$ is the zero map, the point $(b,0)$ is a spherical point of $\tau_{\mc{O}_b}$.

Next we will relate invariant polynomials on $\mf{n}$ to invariant polynomials on $H_\mc{V}$. Let $p$ be a $K$-invariant polynomial on $\mf{n}$. Then we can define a $K_A$-invariant polynomial $p_A$ on $H_\mc{V}$ by
\begin{equation}
p_A(\varphi(v),t) = p(v,tA),
\end{equation}
where $\varphi:H_A \rightarrow H_\mc{V}$ is the map from Section \ref{Representation Theory of KN}.

Let $D_{p_A}$ and $D_p$ denote the corresponding differential operators on $H_\mc{V}$ and $\mf{n}$, respectively. Recall that for a type I representation $\pi_\lambda \in \widehat{N}$ and associated type I representation $\rho_\lambda \in \widehat{H_\mc{V}}$, we have the relationship $\pi_\lambda=\rho_\lambda \circ \varphi$ on $H_\mc{V}$. This implies that 
\[
\rho_\lambda(D_{p_A})=\pi_\lambda(D_p).
\]
This tells us that
\begin{equation}
\label{eigenvalue reduction}
\widehat{D}_{p_A}(\phi_{\lambda,{\bf m}}) = \widehat{D}_p(\psi_{\lambda,{\bf m}}).
\end{equation}
Here, $\phi_{\lambda, \m}$ is a type I spherical function in $\Delta(K,H_a)$ and $\psi_{\lambda, \m}$ is a type I spherical function in $\Delta(K,N)$. (Note that type I spherical functions for $(K,N)$ have the same parameterization as type I spherical functions for $(K,H_V)$, so we distinguish between the two by using $\phi$ to refer to functions in $\Delta(K,H_V)$ and $\psi$ for functions in $\Delta(K,N)$.)

Similarly, if $\phi_b$ is a type II spherical function with corresponding orbit $\mc{O}_b = \{(b,0)\} \subset \mf{n}^*$ and representation $\pi_b \in \widehat{N}$, then we still have the relationship $\pi_b = \rho_b \circ \varphi$, where $\rho_b \in \widehat{H_\mc{V}}$ is the corresponding type II representation of the Heisenberg group. This implies that, as in the type I case, for any $K$-invariant polynomial $p$ on $\mf{n}$,
\[
\widehat{D}_{p_A}(\phi_{\varphi(b)})=\widehat{D}_p(\psi_b).
\]
Therefore, using  Lemma \ref{type2},
\begin{equation}
\label{type 2 eigenvalue poly}
\widehat{D}_p(\psi_b) = \widehat{D}_{p_A}(\phi_{\varphi(b)})=p_A(i\varphi(b),0)=p(ib,0). 
\end{equation}

Now we can explicitly define $\Psi$.

\begin{definition} Define the map $\Psi: \Delta(K,N) \to \mc{A}(K,N)$ by
\[
\Psi(\psi_{\lambda, {\bf m}}) = K \cdot (\sqrt{\lambda}v_{\bf m}, \lambda A)
\]
for type I spherical functions, and
\[
\Psi(\psi_b) = K \cdot (b,0)
\]
for type II spherical functions. Here $v_\m$ is a spherical point of the moment map $\tau:\mc{V} \rightarrow \mf{k}^*$ defined by $\tau(v)(Z) = - \frac{1}{2} \langle v, Z \cdot v \rangle$, as described in Section \ref{Heisenberg Orbit Model}.
\end{definition}

This is the same map defined in Section \ref{Introduction} (see Proposition 5.3 of \cite{BRfree}). Our main result is that $\Psi$ is a homeomorphism. The following section is dedicated to the proof of this fact. 

\section{The Orbit Model}
\label{The Orbit Model}

In this section we establish the orbit model for nilpotent Gelfand pairs which have spherical central orbits and are nondegenerate on $\mc{V}$, namely those satisfying Definitions \ref{Spherical Central Orbits} and \ref{Non-Degenerate on V}. Again, we assume $(K,N)$ is one such Gelfand pair. The goal of this section is to show that $\Psi$ is a homeomorphism. Before starting the proof, we need two more tools. 

\begin{lemma}\label{mod}
Let $p$ be the $K$-invariant polynomial on $\mc{V}$ given by $p(v)=|v|^2.$ Then for a spherical point $v_\m$ of $\tau:\mc{V}\rightarrow \mf{k}^*$, $\widehat D_p(\psi_{\lambda,\m})=-\lambda|v_\m|^2=-2\lambda|\m|$ and $\widehat D_p(\psi_b)=-|b|^2.$
\end{lemma}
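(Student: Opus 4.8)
The plan is to compute these eigenvalues by reducing everything to the Heisenberg setting already developed in Section \ref{Heisenberg Gelfand Pairs}, using the dictionary (\ref{eigenvalue reduction}) and (\ref{type 2 eigenvalue poly}). First I would treat the type I case. By construction, the polynomial $p_1(v,Y)=|v|^2$ on $\mf{n}$ corresponds under $\varphi$ to a $K_A$-invariant polynomial $(p_1)_A$ on $H_\mc{V}$; I would check that because $\varphi$ acts on $\mc{V}$ by the block-scalings $v_\mu \mapsto \frac{1}{\sqrt\mu}v_\mu$, the polynomial $|v|^2$ does \emph{not} transport to $|\varphi(v)|^2$ in general — so one must be slightly careful, but the cleaner route is simply to observe that the relation $\widehat{D}_p(\psi_{\lambda,\m})=\widehat{D}_{p_A}(\phi_{\lambda,\m})$ of (\ref{eigenvalue reduction}) holds for \emph{this} particular $p_A$, and then invoke Lemma \ref{top} / Corollary \ref{top2} directly on $\mf{n}$ rather than on $H_\mc{V}$.

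Concretely, for the type I case I would argue as follows. The eigenvalue $\widehat{D}_p(\psi_{\lambda,\m})$ is, by (\ref{lambdatoone})-type scaling, equal to $\lambda$ times $\widehat{D}_p(\psi_{1,\m})$ up to lower-order terms, but in fact for $p(v)=|v|^2$ the operator is homogeneous of degree $2$ and the quantization is exactly the scaled degree operator, so there are no lower-order terms; then Lemma \ref{type1sph} identifies the spherical point in $\mc{O}_{\lambda A}$ as $(\sqrt\lambda v_\m,\lambda A)$, and evaluating $p$ there gives $p(\sqrt\lambda v_\m)=\lambda|v_\m|^2$. Combining with the sign $(-1)^{s_p/2}=(-1)^1=-1$ from Corollary \ref{top2} (or Lemma \ref{top}), we get $\widehat{D}_p(\psi_{\lambda,\m})=-\lambda|v_\m|^2$. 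Finally, since $v_\m$ is a spherical point of type $\alpha_\m$, i.e.\ $\tau(v_\m)=\alpha_{\m}$, and since $p_1(v)=|v|^2$ is precisely the generator whose associated differential operator is (a scalar multiple of) the Euler degree operator, the computation in Example \ref{D0} gives $|v_\m|^2=2|\m|$; alternatively one reads $\widehat{D}_1(\phi_{1,\m})=-2|\m|$ directly from that example and scales by $|\lambda|=\lambda$. This yields $\widehat{D}_p(\psi_{\lambda,\m})=-\lambda|v_\m|^2=-2\lambda|\m|$.

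For the type II case, the computation is immediate from (\ref{type 2 eigenvalue poly}): $\widehat{D}_p(\psi_b)=p(ib,0)=|ib|^2$. Here I would be careful about what $|ib|^2$ means — since $p(v)=|v|^2=\sum v_j^2$ is the \emph{polynomial} $\sum v_j^2$ evaluated formally (as in Lemma \ref{type2}, where $p(ib)$ means substituting the complex vector $ib$ into the polynomial $p$), we get $p(ib)=\sum (ib_j)^2 = -\sum b_j^2 = -|b|^2$. Hence $\widehat{D}_p(\psi_b)=-|b|^2$, as claimed.

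I expect the only real subtlety — hardly an obstacle — to be bookkeeping around the two meanings of $|\cdot|^2$: the genuine inner-product norm versus the formal polynomial $\sum (\cdot)_j^2$ evaluated at a complex argument, together with keeping the sign $(-1)^{s_p/2}$ and the factor $|\lambda|=\lambda$ (recall $\lambda>0$ in our setting) straight. Everything else is a direct citation of Example \ref{D0}, Lemma \ref{top1sph} [Lemma \ref{type1sph}], Corollary \ref{top2}, Lemma \ref{type2}, and equations (\ref{eigenvalue reduction}) and (\ref{type 2 eigenvalue poly}).
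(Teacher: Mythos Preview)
Your proposal is correct and follows essentially the same route as the paper. The paper's proof is a single line: it asserts that $\pi_\lambda(D_p)=-2\lambda\sum_j v_j\partial_{v_j}$ is the scaled Euler operator on Fock space, hence acts on $P_{\alpha_\m}$ by $-2\lambda|\m|$, and then equates this with $-\lambda|v_\m|^2$ without further comment (and it does not write out the type~II case at all). Your argument unpacks exactly these steps---the degree-operator computation is Example~\ref{D0}, the identity $|v_\m|^2=2|\m|$ is the degree-one instance of Lemma~\ref{top}/Corollary~\ref{top2} where there are no lower-order terms, and the type~II formula is (\ref{type 2 eigenvalue poly}) read as a formal polynomial substitution---so the content is identical, just spelled out with explicit citations. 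Your caution about $\varphi$ not carrying $|v|^2$ to $|v|^2$ is a fair bookkeeping point, but as you note it is bypassed by computing $\pi_\lambda(D_p)$ directly rather than routing through $p_A$, which is precisely what the paper does.
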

\begin{proof}
We have $\pi_\lambda(D_p)=-2\lambda \sum_j v_j \frac{\partial}{\partial v_j}$ acts on $P_{\alpha_\m}$ by the degree $|\m|$. Thus $ \widehat{D}_p(\phi_{\lambda, \m})=-2 \lambda |\m| = -\lambda |v_\m|^2.$  
\end{proof}

The following fact from invariant theory is vital.

\begin{theorem}\label{invariants} \cite{OV} The orbits of a compact linear group acting in a real vector space are separated by the invariant polynomials.
\end{theorem}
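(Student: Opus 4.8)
The plan is to use the classical argument: produce a continuous $G$-invariant function separating the two orbits, polynomially approximate it, and average the approximating polynomial to restore invariance. Write $G$ for the compact linear group and $V$ for the (finite-dimensional) real vector space it acts on, and let $dg$ denote normalized Haar measure on $G$. First I would make two standard reductions. Averaging an arbitrary inner product over $G$ yields a $G$-invariant inner product, so without loss of generality $G \subseteq O(V)$; in particular $|g\cdot v| = |v|$ for all $g \in G$, $v \in V$. Second, since $G$ is compact, every orbit $G\cdot v$ is compact, so if $x,y \in V$ lie in distinct $G$-orbits then $G\cdot x$ and $G\cdot y$ are disjoint compact subsets of $V$.

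Next I would build a $G$-invariant continuous separating function. Since $G\cdot x$ and $G\cdot y$ are disjoint compact sets in the metric space $V$, the function
\[
f(v) = \frac{d(v,\, G\cdot x)}{d(v,\, G\cdot x) + d(v,\, G\cdot y)}
\]
is continuous (a form of Urysohn's lemma), vanishes identically on $G\cdot x$, and equals $1$ identically on $G\cdot y$. Setting $\widetilde f(v) := \int_G f(g^{-1}\cdot v)\, dg$, left invariance of $dg$ shows $\widetilde f$ is continuous and $G$-invariant, and since $f$ was already constant ($=0$, resp. $=1$) on the orbit of $x$ (resp.\ $y$), we still have $\widetilde f(x) = 0$ and $\widetilde f(y) = 1$.

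Then I would replace $\widetilde f$ by a polynomial. Fix a closed origin-centered ball $\overline B$ of some radius $R$ with $G\cdot x \cup G\cdot y \subseteq \overline B$; note $\overline B$ is $G$-invariant because $G$ acts orthogonally. By the Weierstrass approximation theorem there is a polynomial $q$ on $V$ with $\sup_{\overline B}|q - \widetilde f| < \tfrac13$. Averaging once more, set $p(v) := \int_G q(g^{-1}\cdot v)\, dg$. Here the crucial point is that $p$ is again a polynomial: the $G$-action on the finite-dimensional space $\mathrm{Sym}^{\le d}(V^*)$ of polynomials of degree $\le d := \deg q$ is continuous, so $g \mapsto q\circ g^{-1}$ is a continuous curve in that finite-dimensional space and its Haar integral lies in the space; left invariance of $dg$ then makes $p$ a $G$-invariant polynomial. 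Finally, for $v\in\overline B$ we have $g^{-1}\cdot v \in \overline B$ for all $g$, so using $G$-invariance of $\widetilde f$,
\[
|p(v) - \widetilde f(v)| = \left| \int_G \bigl( q(g^{-1}\cdot v) - \widetilde f(g^{-1}\cdot v) \bigr)\, dg \right| \le \sup_{\overline B}|q - \widetilde f| < \tfrac13 ;
\]
in particular $p(x) < \tfrac13 < \tfrac23 < p(y)$, so the $G$-invariant polynomial $p$ separates $x$ and $y$.

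The only genuinely non-formal step — and hence the one needing the most care — is the assertion that averaging the polynomial $q$ over $G$ yields a polynomial, not merely a continuous function; everything else is Urysohn plus Weierstrass plus bookkeeping. This is handled by the finite-dimensionality of $\mathrm{Sym}^{\le d}(V^*)$ as above (equivalently: each coefficient of $p$ is the Haar integral of a function of $g$ that is a finite linear combination, with continuous coefficients, of the coefficients of $q$, and hence converges). I would also note explicitly that the conclusion is precisely the statement that $G$-invariant polynomials separate $G$-orbits, which is the form in which the result is invoked in the sequel.
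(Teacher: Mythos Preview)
Your argument is correct. The paper does not supply a proof of this statement at all: it is simply quoted as a standard fact from \cite{OV} and invoked as a black box in Section~\ref{The Orbit Model}. So there is no ``paper's proof'' to compare against; you have written a clean, self-contained proof of a result the authors chose to cite rather than prove.

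Two minor remarks on your write-up. First, the averaging step producing $\widetilde f$ is actually redundant: your function $f$ is already $G$-invariant, since $d(g\cdot v, G\cdot x) = d(v, G\cdot x)$ for all $g$ (the orbit is $G$-stable and $G$ acts by isometries). This does no harm, but you could streamline by dropping that step. Second, the argument you give is the standard ``Weierstrass plus averaging'' proof; an alternative route, closer in spirit to the algebraic-group references like \cite{OV}, is to observe that $\R[V]^G$ is finitely generated (Hilbert), that the induced map $V \to \R^r$ via a generating set is proper (since $|v|^2$ is invariant), and that its fibers are exactly the $G$-orbits. Your approach is more elementary and perfectly adequate for the use made of the result here.
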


Now we are ready to prove Theorem \ref{MainThm}. From Theorem \ref{The Eigenvalue Model}, we know that a sequence $\{\psi(n)\}$ of spherical functions converges in $\Delta(K,N)$ to $\psi$ if and only if the corresponding sequence of eigenvalues $\{\widehat{D}_p(\psi(n))\}$ converges to $ \widehat{D}_p(\psi)$ for all $K$-invariant polynomials $p$. Therefore, to prove Theorem \ref{MainThm}, it is enough to show that a sequence in $\mathcal{A}(K,N)$ converges if and only if the corresponding sequence in $\Delta(K,N)$ or $\mc{E}(K,N)$ converges. 

Without loss of generality, we can assume that any convergent sequence in $\mc{A}(K,N)$ consists of orbits corresponding entirely to type I representations or entirely to type II representations, and any convergent sequence in $\Delta(K,N)$ or $\mc{E}(K,N)$ has the same property. We refer to elements of $\mathcal{E}(K,N)$ which correspond to type I (type II) representations as ``type I (type II) eigenvalues,'' and elements of $\mathcal{A}(K,N)$ which correspond type I (type II) representations as ``type I (type II) orbits.''  

In either model, $\mathcal{E}(K,N)$ or $\mathcal{A}(K,N)$, a type II sequence can only converge to another type II element. Indeed, if $\{(D_0(\psi_{b(n)}), \ldots, D_r(\psi_{b(n)}))\}$ is a convergent sequence of type II eigenvalues, then $D_0(\psi_{b(n)})=p_0(b,0)=0$ by equation (\ref{type 2 eigenvalue poly}). This implies that the limit of the sequence of eigenvalues must be a type II eigenvalue. Similarly, if $\{K \cdot (b(n),0)\}$ is a convergent sequence of type II orbits in $\mc{A}(K,N)$, then the limit of the sequence of orbits must be a type II orbit since $K \cdot (b(n),0) \subset \mc{V}$. In contrast to this, type I sequences in either model can converge to either a type I element or a type II element. In the following arguments we treat each of these three possibilities separately. 

We begin with the type II case. By equation (\ref{type 2 eigenvalue poly}), the eigenvalues of a type II spherical function are exactly the values of the invariant polynomials on the spherical point. This implies that for a sequence $\{b(n)\}$ of parameters of type II spherical functions, $\widehat{D}_p(\psi_{b(n)}) \rightarrow \widehat{D}_p(\psi_b)$ if and only if $p(b(n)) \rightarrow p(b)$ for all invariant polynomials $p$ on $\mc{V}$. Since invariant polynomials separate points (Lemma \ref{invariants}), this happens exactly when $K\cdot (b(n),0) \rightarrow K \cdot (b,0)$. 

Next we address convergent type I sequences. Note that a type I sequence in either model cannot be convergent unless the sequence $\{\lambda(n)\} \subset \R_{>0}$ is convergent. Indeed, let $\{K \cdot (\sqrt{\lambda(n)}v_{\m(n)}, \lambda(n)A)\}$ be a convergent sequence of type I orbits. Since the action of $K$ on $\mathfrak{z}$ is unitary, the norm on $\mf{z}$ is an invariant polynomial: it is the polynomial $p_0$ in our enumeration in Section \ref{Representation Theory of KN}. By Theorem \ref{invariants}, the values of $p_0$ on the sequence of $K$-orbits must converge, so $\lambda(n) \rightarrow \lambda$ for some $\lambda \geq 0$. Now let $\{(D_0(\psi_{\lambda(n),\m(n)}),\ldots, D_r(\psi_{\lambda(n),\m(n)}))\}$ be a convergent sequence of type I eigenvalues. By the reductions in Section \ref{Moment Map and Spherical Points} and Example \ref{D0}, we have eigenvalues $\widehat{D}_0(\psi_{\lambda(n),\m(n)})=\lambda(n)$. Since $\Phi$ is an isomorphism, $\lambda(n) \rightarrow \lambda$, for $\lambda \geq 0$. In either model, if the limit point $\lambda$ is strictly greater than zero, the sequence converges to a type I element. If the limit point $\lambda=0$, then the sequence converges to a type II element. We now address each of these two cases. 

Let $\{K \cdot (\sqrt{\lambda(n)}v_{\m(n)},\lambda(n)A)\}$ be a sequence of type I orbits that converges to the type I orbit $K \cdot (\sqrt{\lambda} v_\m, \lambda A)$, where $\lambda>0$. By moving to subsequences, we can assume that the sequence $\{(\sqrt{\lambda(n)} v_{\m(n)}, \lambda(n) A)\}$ of spherical points converges, with $\lambda(n) \rightarrow \lambda$. By Theorem \ref{invariants}, the convergence of orbits implies that the sequence $\{p_1(\sqrt{\lambda(n)}v_{\m(n)})=\lambda(n)|\m(n)|\}$ converges to $p_1(\sqrt{\lambda}v_\m)=\lambda |\m|$. This implies that $\{\m(n)\}$ is bounded. Since $\{\m(n)\}$ lies on a discrete lattice and $\{\m(n)\}$ is bounded, the sequence $\{\m(n)\}$ must eventually be constant, so we can assume without loss of generality that $\m(n)=\m$ for all $n$. By equation (\ref{eigenvalue reduction}), for any invariant polynomial $p$ on $\mf{n}$, there is an invariant polynomial $p_A$ on $H_{\mc{V}}$ such that 
\[
\widehat{D}_p(\psi_{\lambda(n),\m})=\widehat{D}_{p_A}(\phi_{\lambda(n),\m}).
\]

Now the sequence of $K_A$-orbits $K_A\cdot(\sqrt{\lambda(n)}v_\m,\lambda(n))$ converges to $K_A\cdot(\lambda v_\m,\lambda)$ in $\mf h_\mc{V}^*$, so by the corresponding result for the Heisenberg group, we have
$$\widehat D_{p_A}(\phi_{\lambda(n),\m})\to  \widehat{D}_{p_A}(\phi_{\lambda,\m}),$$
and therefore
$$\widehat D_p(\psi_{\lambda(n),\m})\to \widehat D_p(\psi_{\lambda,\m}).$$

Conversely, suppose that $\{\psi_{\lambda(n), \m(n)}\}$ is a sequence of type I spherical functions which converges to the type I spherical function $\psi_{\lambda, \m}$. Then by Theorem \ref{The Eigenvalue Model}, for all invariant polynomials $p$ on $\mf{n}$, $\widehat{D}_p(\psi_{\lambda(n), \m(n)})\rightarrow \widehat{D}_p(\psi_{\lambda, \m})$. As observed above, this implies that $\lambda(n) \rightarrow \lambda$. By Lemma \ref{mod}, the sequence 
\[
\{\lambda|v_{\m(n)}|^2 = 2 \lambda(n) |\m(n)|\}
\]
is convergent. Since $\{\m(n)\}$ is a discrete set, this convergence is only possible if $\m(n)$ is eventually constant, so we can assume $\m(n)=\m$. Then, the corresponding sequence of spherical points 
\[
\{(\sqrt{\lambda(n)}v_\m, \lambda(n) A)\} 
\]
converges to $(\sqrt{\lambda}v_\m, \lambda A)$ in $\mathfrak{n}^*$, and so $K \cdot (\sqrt{\lambda(n)}v_\m, \lambda(n) A) \rightarrow K \cdot (\sqrt{\lambda}v_\m, \lambda A)$.

 Our final step is to address the case of a type I sequence converging to a type II element. Assume that $\{K \cdot (\sqrt{\lambda(n)}v_{\m(n)}, \lambda(n)A)\}$ is a sequence of type I orbits converging to the type II orbit $K \cdot (b, 0)$. By moving to subsequences, we can assume that $\lambda(n)\to 0$ and $\sqrt{\lambda(n)}v_{\m(n)}\to b'\in K\cdot b.$

The map $\varphi$ defined in Section \ref{Representation Theory of KN} sends spherical points to spherical points, so $(\varphi(\sqrt{\lambda(n)}v_{\m(n)}),\lambda(n))$ is a spherical point in $\mf{h}^*_\mc{V}$ for each $n$, and we have the convergence $(\varphi(\sqrt{\lambda(n)}v_{\m(n)}),\lambda(n)) \rightarrow (\varphi(b),0)$ in $\mf{h}^*_\mc{V}$. This implies that the corresponding $K_A$-orbits for the Heisenberg setting converge:
 \[
 K_A \cdot (\varphi(\sqrt{\lambda(n)}v_{\m(n)}),\lambda) \rightarrow K_A \cdot (\varphi(b), 0)
 \]
in $\mf{h}_\mc{V}^*/K_A$. Let $p$ be a $K$-invariant polynomial on $\mf{n}$, and let $p_A$ be the corresponding $K_A$-invariant polynomial on $H_\mc{V}$. By the theorem for Heisenberg groups (Proposition 7.1 in \cite{BRfree}), equation (\ref{eigenvalue reduction}) and Lemma \ref{type2}, we have
 \[
 \widehat{D}_p(\psi_{\lambda(n), \m(n)}) = \widehat{D}_{p_A}(\phi_{\lambda(n), \m(n)}) \rightarrow \widehat{D}_{p_A}(\phi_{\varphi(b)}) =p_A(\varphi(b),0)=p(b,0)=\widehat{D}_p(\psi_{\varphi(b)}).
 \]

Conversely, suppose that type I eigenvalues $\{\widehat{D}_p(\psi_{\lambda(n), \m(n)})\}$ converge to the type II eigenvalue $\widehat{D}_p(\psi_b)$ for all $K$-invariant polynomials $p$ on $\mf{n}$. Then  $\lambda(n) \rightarrow 0$, and for $p_1(v,Y) = |v|^2$, 
\[
\widehat{D}_1(\psi_{\lambda(n), \m(n)})=-\lambda(n)|v_{\m(n)}|^2=
-2\lambda(n)|\m(n)| \rightarrow -|b|^2=\widehat{D}_1(\psi_b)
\]
by Lemma \ref{mod}. Thus   the sequences $\{\lambda(n)v_{\m(n)}\}$ and $\{\lambda(n)v_{\m(n)}\}$ are bounded.  If necessary, we can go to convergent subsequences.

Let $p$ be a homogeneous, $K_A$-invariant polynomial on $\mc{V}$. 
Then on $H_\mc{V},$
$$\widehat D_p(\phi_{\lambda(n),\m(n)})=(-\lambda(n))^{s_p/2}\widetilde p(\m(n)),$$
where $\widetilde p(\m)$ is a polynomial of degree $s_p/2$. 
Since the sequence $\{\lambda(n){\m(n)}\}$ is bounded, the sequence $\{(\lambda(n))^{s_p/2}\tp \widetilde p(\m(n))\}$ will be bounded, and the lower order terms will go to zero. Hence 
$$\lim_{n\to\infty}\widehat D_p(\phi_{\lambda(n),\m(n)})
=\lim_{n\to\infty}(-\lambda(n))^{s_p/2}\tp\widetilde p(\m(n))
=(-1)^{s_p/2}\lim_{n\to\infty}p(\sqrt{\lambda(n)}v_{\m(n)}).$$
Thus if $p$ is a $K$-invariant polynomial on $\mc{V}$ of degree $s_p$,  then 
\[
\widehat{D}_p(\psi_{\lambda(n), \m(n)})
=\widehat D_{p_A}(\phi_{\lambda(n),\m(n)})
\to (-1)^{s_p/2} p(b,0),
\]
and therefore $p(\sqrt{\lambda(n)}v_{\m(n)})\to p(b,0)$.

If $p$ is a mixed invariant on $\mc{V} \oplus \mf{z}$, homogeneous of degree $s_p$ on $\mc{V}$ and $z_p\ne0$ on $\mf{z}$, then
$$p_A(\varphi(v),t)=p(v,tA)=t^{z_p}p(v,A),$$ and so with the $K_A$-invariant polynomial $q(v)=p(v,A)$, we have
$$\widehat D_p(\psi_{\lambda(n),\m(n)})
=\widehat D_{p_A}(\phi_{\lambda(n),\m(n)})
= (i\lambda(n))^{z_p} \widehat D_{q}(\phi_{\lambda(n),\m(n)})\to p(ib,0)=0.$$
On the other hand,
$$\lim_{n\to\infty}\widehat D_{q}(\phi_{\lambda(n),\m(n)})=\lim_{n\to\infty}q(\sqrt{\lambda(n)}v_{\m(n)}),$$
and
hence
$$\lim_{n\to\infty}p(\sqrt{\lambda(n)}v_{\m(n)},\lambda(n)A)
=\lim_{n\to\infty}\lambda(n)^{z_p}q(\sqrt{\lambda(n)}v_{\m(n)})=0=p(b,0).$$
Thus for all $K$-invariant polynomials $p$ on $\mf{n}$, we have
$$p(\sqrt{\lambda(n)}v_{\m(n)},\lambda(n)A)\to p(b,0),$$
and so every limit point of the sequence $(\sqrt{\lambda(n)}v_{\m(n)},\lambda(n)A)$ is in $K\cdot (b,0)$, and therefore $K\cdot(\sqrt{\lambda(n)}v_{\m(n)},\lambda(n)A)\to K\cdot(b,0).$ This completes the proof of the theorem.

\section{Example}
\label{Example}

In this section we provide a detailed description of the orbit model of the specific nilpotent Gelfand pair $(K,N)$ where $K = \UU_2 \times \SU_2$ and $N = \mc{V} \oplus \mf{z}$, with  $\mc{V} = \C^2 \otimes \C^2$  and $\mf{z} = \su_2(\C)$ is the center. Along the way, we provide explicit calculations of the relevant objects described in greater generality in the previous sections. 

We  define the bracket on $\mf{n}$ by $$[(u,A),(v,B)] = uv^{\ast} - vu^{\ast} - \frac{1}{2}\Tr(uv^{\ast} - vu^{\ast}) \in \mf{z}.$$ The action of $k = (k_1,k_2) \in K$ on $x = (u,A) \in \mf{n}$ is given by $$k \cdot x = (k_1uk_2^*, k_1Ak_1^*),$$
where $u\in \mc V$ is a $2\times 2$ complex matrix.

We can identify $\mc{V}^{\ast}$ with $\mc{V}$ via the real inner product $\langle w,v \rangle_{\mc{V}} = \Tr(wv^{\ast})$. 
Additionally, we can identify $\mf{z}^{\ast}$ with $\mf{z}$ via the real inner product $\langle A, B \rangle_{\mf{z}} = -\frac{1}{2}\Re(\Tr(AB))$.  This allows us to identify $\mf{n}^*$ with $\mf{n}$ via the inner product $\langle (w,A), (v, B) \rangle _\mf{n} = \langle w, v \rangle _\mc{V} + \langle A, B \rangle_\mf{z}$.

We use the orbit method to construct the representations of N, so we  construct the coadjoint orbits of N. Let $\ell \in \mf{n}^{\ast}$ be given by pairing with the element $(w,A) \in \mf{n}$.  If $X = (u,B) \in \mf{n}$ and $Y = (v,C) \in \mf{n}$, we have the action
\begin{align*}
Ad^*(X) {\ell}(Y) &= {\ell}(Y-[X,Y])
\\&= \langle v,w\rangle_{\mc{V}} + \langle A,C\rangle_{\mf{z}} + \langle A,[u,v]\rangle_{\mf{z}}.
\end{align*}
One readily computes that $\langle A,[u,v]\rangle = -\langle Au,v\rangle$ where $Au$ is the usual matrix multiplication.  In particular,
$$
Ad^*(X) \ell(Y) = \langle (w + Au,A),Y\rangle.
$$
Thus, we see that our representations of $N$ are broken into the type I and type II orbits described in Section \ref{Representation Theory of KN}, according to $A\ne0$ or $A=0.$

Since each matrix in $\mf{su}_2(\C)$ can be unitarily diagonalized, each non-zero $K$-orbit in $\mf{z}$ has a representative of the form
$$
\begin{pmatrix}
\lambda i&0\\
0& -\lambda i
\end{pmatrix}
$$
for $\lambda \in \R^+$.  Let $A = \begin{pmatrix} i&0\\0&-i \end{pmatrix}$ be the fixed unit base point in $\mathfrak{z}$, and consider the type I representation $\pi = \pi_{(0,A)}$.  In this case, we see that the stabilizer $K_{\pi}$ of the isomorphism class of $\pi$ in $K$  is
$$
K_\pi = (\UU_1 \times \UU_1) \times \SU_2.
$$ 

As in Section \ref{Heisenberg Spherical Functions and the Eigenvalue Model}, we realize $\pi$ in Fock space $P(\mc{V})$. Regarding $x\in \mc{V}$ as a $2\times 2$ complex matrix, the elements of  $P(\mc{V})$ are holomorphic polynomials in the coordinates $x_{11},x_{12},x_{21}$ and $x_{22}$. As noted in \cite{HU}, under the action of $U(2)\times U(2)$, the space $\C[\mc{V}]$ has highest weight vectors generated by  $g_1(x) = x_{11}$ and $g_2(x) = \det(x).$
Since the action of $K$ on $\C[\mc{V}]$ is multiplicity free, we have the decomposition $$\C[\mc{V}]=\sum_\alpha V_{\alpha} \otimes V_{\alpha}^{\ast},$$ where  the highest weight vector of $V_{\alpha} \otimes V_{\alpha}^{\ast}$ is a monomial in $g_1$ and $g_2$, and $\alpha$ corresponds to a two-rowed Young diagram.

Restricting to $K_\pi$ amounts to restricting from $U(2)\times U(2)$ to $K_\pi=(U(1)\times U(1))\times SU(2).$  The left $V_\alpha$'s split into one-dimensional subspaces, and   the highest weight vectors of the representation of $K_\pi$ on $\C[\mc{V}]$ are monomials in $h_1(x) = x_{11}$, $h_2(x) = x_{21}$, and $h_3(x) = \det(x)$.  Then the above decomposition becomes
$$\C[\mc{V}] =\bigoplus_{\bf m}V_{\bf m}$$

where ${\bf m}=(m_1,m_2,m_3) \in \Lambda \simeq (\Z_{\ge0})^3$ is the monoid of all appearing highest weights, and $V_{\bf m} $ is the irreducible subspace with highest weight $h^{\bf m}=h_1^{m_1}h_2^{m_2}h_3^{m_3}.$

One computes the corresponding highest weights $\alpha_1$, $\alpha_2$, and $\alpha_3$ explicitly by computing the action of $K_{\pi}$ on $\{h_1,h_2,h_3\}$. Let $k = (t^{-1},s) \in K_{\pi}$ with $t^{-1} = (t_1,t_2)$ an element of the torus of $\UU(2)$ and $s = (s_1,s_1^{-1})$ an element of the torus of $\SU(2)$, with $X = (x_{ij}) \in \mc{V}$. Then $$k \cdot h_1(X) = h_1(tXs^{\ast}) = t_1\overline{s}_1h_1(X) ,$$ and hence $\alpha_1= (1,0,1)$. Similar computations show us that $\alpha_2 = (0,1,1)$ and $\alpha_3 = (1,1,0)$. Thus, if $h = h_1^{m_1}h_2^{m_2}h_3^{m_3}$ is the highest weight vector corresponding to the irreducible representation $V$ of $K_{\pi}$, then the highest weight of $V$ is $(m_1+m_3, m_2+m_3, m_1+m_2)$. Note that since type II representations $\pi \in \widehat{N}$ are one-dimensional characters, the action of $K_\pi$ is trivial, and the representation of $K_\pi$ is the trivial one-dimensional representation. Therefore, the only highest weight vector in a type II representation space is the (unique) unit vector with corresponding highest weight $0$.

We use the generators  for $K$-invariant polynomials found in \cite{FRY1}:

\begin{align*}
p_1(v,z) = & |z|^2\\
p_2(v,z) = & |v|^2\\
p_3(v,z) = & |\det(v)|^2 = \det(v_{ij})\det(\overline{v_{ij}}), \text{ and }\\
p_4(v,z) = & i\Tr(v^*zv),
\end{align*}
with corresponding differential operators $D_1,D_2,D_3,D_4$ in $\D_K (N)$.

Our choice of quantization produces the following operators on Fock space:

\begin{align*}
\pi_\lambda(D_1)&= \lambda^2\\
\pi_\lambda(D_2)&= \sum v_{ij} \frac{\partial}{\partial v_{ij}} \\
\pi_\lambda(D_3)&= \det(v_{ij})\left[\frac{\partial}{\partial v_{11}}\frac{\partial}{\partial v_{22}} - \frac{\partial}{\partial v_{12}}\frac{\partial}{\partial v_{21}}\right]\\
\pi_\lambda(D_4)&= \lambda\left[v_{11}\frac{\partial}{\partial v_{11}} - v_{21}\frac{\partial}{\partial v_{21}} + v_{12}\frac{\partial}{\partial v_{12}} - v_{22}\frac{\partial}{\partial v_{22}}\right].\\
\end{align*}

We then compute the eigenvalues of all type I spherical functions by applying these operators to the  highest weight vectors $h_1^{m_1}h_2^{m_2}h_3^{m_3}$, obtaining:

\begin{align*}
\widehat{D}_1(\psi_{\pi, \alpha}) &=\lambda^2 \\
\widehat{D}_2(\psi_{\pi, \alpha}) &=\lambda (m_1 + m_2 + 2m_3) \\
\widehat{D}_3(\psi_{\pi, \alpha}) &={\lambda}^2m_3 (1+m_1+m_2+m_3) \\
\widehat{D}_4(\psi_{\pi, \alpha}) &={\lambda}^2 (m_1 -m_2)
\end{align*}

Similarly, the type II eigenvalues can be directly computed as 

\begin{align*}
\widehat{D}_1(\chi_b)&=0 \\
\widehat{D}_2(\chi_b)&=|b|^2 \\
\widehat{D}_3(\chi_b)&=|det(b)|^2 \\
\widehat{D}_4(\chi_b)&= 0.
\end{align*}

Recall from Section \ref{The Orbit Model} that we construct the orbit model by using a moment map to identify the spherical points in $\mc{A}(K,N)$. We define the moment map $\tau_{\mc{O}}: \mc{O}\to \mf{k}_{\mc{O}}^{\ast}$ on a coadjoint orbit $\mc{O} \subset \mf{n}^{\ast}$ with aligned point $\ell = (0,A)$ as in Section \ref{Moment Map and Spherical Points}. 
Let
\[u=\twomat{u_{11}}{u_{12}}{u_{21}}{u_{22}} \in \mc{V}, \gamma=\twomat{\gamma_1}{0}{0}{\gamma_2}\in \mathfrak{u}(1)\times \mathfrak{u}(1),\delta=\twomat{\delta_{11}}{\delta_{12}}{-\bar{\delta}_{12}}{-\delta_{11}} \in \mathfrak{su}_2(\C).\]

Note that $\tau(u)$ must be diagonal if $u$ is to map to a weight of $K$. We directly compute that
\begin{align*}
\tau(u)=&\gamma_1(|u_{11}|^2+|u_{12}|^2)+\gamma_2(|u_{21}|^2+|u_{22}|^2)+\bar{\delta}_{11}(|u_{11}|^2-|u_{12}|^2+|u_{21}|^2-|u_{22}|^2)\\
&-\delta_{12}(u_{11}\bar{u}_{12}+u_{21}\bar{u}_{22})+\bar{\delta}_{12}(\bar{u}_{11}u_{12}+\bar{u}_{21}u_{22})
\end{align*}
In particular, $\tau(u)$ is diagonal if and only if $u_{11}\bar{u}_{12}=-u_{21}\bar{u}_{22}$ or equivalently, when $u$ has orthogonal columns. Comparing coefficients, the integrality conditions imply we must have
\begin{align*}
|u_{11}|^2+|u_{12}|^2&=m_1+m_3\\
|u_{21}|^2+|u_{22}|^2&=m_2+m_3\\
|u_{11}|^2+|u_{21}|^2-|u_{12}|^2-|u_{22}|^2&=m_1+m_2.\\
\end{align*}

\noindent
Evaluating our invariant polynomials on type I spherical points $(\sqrt{\lambda}u,\lambda A)$ gives
\begin{align*}
p_1(\sqrt{\lambda}u,\lambda A)&=\lambda \\
p_2(\sqrt{\lambda}u,\lambda A)&=\lambda(m_1 + m_2 + 2m_3) \\ 
p_3(\sqrt{\lambda}u,\lambda A)&= \lambda^2 m_3(m_1 + m_2 + m_3) \\
p_4(\sqrt{\lambda}u,\lambda A)&= \lambda^2(m_1 - m_2) 
\end{align*}
If $(w,0)$ is a spherical point corresponding to a type II representation, one can directly compute that
\begin{align*}
p_1(w,0)&=0 \\
p_2(w,0)&=|w|^2 \\ 
p_3(w,0)&=|det(w)|^2 \\
p_4(w,0)&= 0.
\end{align*}

This example illustrates the behavior of type I spherical points as $\lambda\to 0,$ with $\lambda\m$ bounded. For example, the lower order term (in $\m$) of the eigenvalues  $\widehat{D}_3(\psi_{\pi, \alpha}) ={\lambda}^2m_3 (1+m_1+m_2+m_3) $ go to zero, thus approaching the value of the invariant $p_3(\sqrt{\lambda}u,\lambda A)= \lambda^2 m_3(m_1 + m_2 + m_3) $ on spherical points. In addition, the eigenvalues $\widehat{D}_4(\psi_{\pi, \alpha}) ={\lambda}^2 (m_1 -m_2)$ for the mixed invariant $p_4$ go to zero.

\section*{Acknowledgments}
This project was initiated at an AMS Mathematics Research Community in Snowbird, Utah in June 2016. This material is based upon work supported by the National Science Foundation under Grant Number DMS 1321794.


\begin{thebibliography}{ASM}

\bibitem[BJLR97]{BJLR97}
C. Benson, J. Jenkins, R. Lipsman, and G. Ratcliff,  {\it A geometric criterion for Gelfand pairs associated with the Heisenberg group}, Pacific Journal of Mathematics, {\bf 178}.1 (1997), 1--36.

\bibitem[BJR92]{BJR2}
C. Benson, J. Jenkins, and G. Ratcliff, {\it Bounded K-Spherical Functions on Heisenberg Groups}, Journal of Functional Analysis, {\bf 105} (1992), 409--443.

\bibitem[BJR90]{BJR1}
C. Benson, J. Jenkins, and G. Ratcliff, {\it On Gelfand Pairs Associated with Solvable Lie Groups},  Transactions of the American Mathematical Society, {\bf 321}.1 (1990), 85--116.

\bibitem[BJR99]{BJR99}
C. Benson, J. Jenkins, and G. Ratcliff, {\em The orbit method and Gelfand pairs, associated with nilpotent Lie groups}, The Journal of Geometric Analysis, {\bf 9}.4 (1999), 569--582.

\bibitem[BJRW96]{BJRW}
C. Benson, J. Jenkins, G. Ratcliff, and T. Worku, {\it Spectra for Gelfand Pairs Associated with the Heisenberg Group}, Colloquium Mathematicae, {\bf 71}.2 (1996), 305--328.

\bibitem[BR98]{BR98}
C. Benson and G. Ratcliff, {\it Combinatorics and spherical functions on the Heisenberg group}, Representation Theory, {\bf 2} (1998), 79--105 (electronic).

\bibitem[BR13]{GEOM}
C. Benson and G. Ratcliff, {\it Geometric Models for the Spectra of Certain Gelfand Pairs Associated with Heisenberg Groups}, Annali di Matematica Pura ed Applicata, {\bf 192}.4 (2013), 719--740.

\bibitem[BR04]{BR04}
C. Benson and G. Ratcliff, {\it On multiplicity free actions}, Representations of Real and p-adic groups, volume 2 of \emph{Lecture Notes Series, Institute for Mathematical Sciences, National University of Singapore}, 221--304, Singapore University Press, Singapore, 2004. 

\bibitem[BR08]{BRfree}
C. Benson and G. Ratcliff, {\it The space of bounded spherical functions on the free 2-step nilpotent Lie group}, Transformation Groups, {\bf 13}.2 (2008), 243--281.

\bibitem[Bro73]{Bro73}
I. D. Brown, {\it Dual topology of a nilpotent lie group}, Annales Scientifiques de l'\'Ecole Normale Sup\'erieure, {\bf 6} (1973), 407--411.

\bibitem[FRY12]{FRY1}
V. Fischer, F. Ricci, and O. Yakimova, {\em Nilpotent Gelfand pairs and spherical transforms of Schwartz functions I: Rank-one actions on the centre}, Mathematische Zeitschrift, {\bf 271}.1 (2012), 221--255.

\bibitem[GV88]{Gang}
R. Gangolli and V. C. Varadarajan, \emph{Harmonic Analysis of Spherical Functions on Real Reductive Groups}, Springer-Verlag, New York, 1988.

\bibitem[Hel84]{Helgason}
S. Helgason, {\it Groups and Geometric Analysis}, Academic Press, New York, 1984.

\bibitem[HU91]{HU}
R. Howe and T. Umeda, {\it The Capelli identity, the double commutant theorem, and multiplicity-free actions}, Mathematische Annalen, {\bf 290}.1 (1991), 565--619.

\bibitem[Kir04]{Kir}
A. A. Kirillov, {\it Lectures on the Orbit Method}, American Mathematical Society, Rhode Island, 2004.

\bibitem[LL94]{LL94}
H. Leptin and J. Ludwig, \emph{Unitary Representation Theory of Exponential Lie Groups,} Walter de Gruyter and Co., Berlin, 1994. 

\bibitem[Lip80]{Lip80} 
R. Lipsman, {\it Orbit theory and harmonic analysis on Lie groups with co-compact nilradical}, Journal de Math\'ematiques Pures et Appliqu\'ees, {\bf 59} (1980), 337--374.

\bibitem[Lip82]{Lip82} 
R. Lipsman, {\it Orbit theory and representations of Lie groups with co-compact radical}, Journal de Math\'ematiques Pures et Appliqu\'ees, {\bf 61} (1982), 17--39.

\bibitem[OV12]{OV}
A. Onishchik and E. B. Vinberg, {\it Lie groups and algebraic groups.} Springer Science and Business Media, Berlin, 2012.

\bibitem[Puk78]{Puk78} 
L. Pukanszky, {\it Unitary representations on Lie groups with co-compact radical and applications}, Transactions of the American Mathematical Society, {\bf 236} (1978), 1--49.

\bibitem[FR07]{FR07}
F. F. Ruffino, {\it The Topology of the Spectrum for Gelfand Pairs on Lie Groups}, Bolletino dell'Unione Matematica Italiana, {\bf 8-10-B} (2007), 569--579.


\bibitem[Vin03]{Vin03}
E. B. Vinberg, {\it Communative homogeneous spaces of Heisenberg type}, Transactions of the Moscow Mathematical Society, {\bf 64} (2003), 47--80.

\bibitem[Wol06]{Wol06} 
J. Wolf, {\it Spherical functions on Euclidean space}, Journal of Functional Analysis, {\bf 239} (2006), 127--136.

\bibitem[Yak05]{Yak05}
O. Yakimova, {\it Gelfand pairs}, Dissertation, Rheinischen Friedrich-Wilhelms-Universit\"at Bonn, 2004, Bonner Mathematische Schriften, vol. {\bf 374} (2005).

\bibitem[Yak06]{Yak06}
O. Yakimova, {\it Principal Gelfand pairs}, Transformation Groups, {\bf 11} (2006), 305--335. 

\end{thebibliography}
\end{document}